\theoremstyle{plain}
\newtheorem{thm}{Theorem}[section]
\newtheorem*{thm*}{Theorem}
\newtheorem*{cor*}{Corollary}
\newtheorem{prop}[thm]{Proposition}
\newtheorem{lem}[thm]{Lemma}
\newtheorem{cor}[thm]{Corollary}
\newtheorem*{claim*}{Claim}
\theoremstyle{definition}
\newtheorem{ex}[thm]{Example}
\theoremstyle{remark}
\numberwithin{equation}{thm}
\def\GCD{\operatorname{GCD}}
\def\Max{\operatorname{Max}}
\def\mod{\mathrm{mod}}
\def\m{\mathfrak m}
\newcommand{\Aut}{\mathrm{Aut}}
\newcommand{\calC}{\mathcal{C}}
\newcommand{\fka}{\mathfrak{a}}
\newcommand{\fkb}{\mathfrak{b}}
\newcommand{\fkp}{\mathfrak{p}}
\newcommand{\mapright}[1]{%
\smash{\mathop{%
\hbox to 1cm{\rightarrowfill}}\limits^{#1}}}
\newcommand{\mapleft}[1]{%
\smash{\mathop{%
\hbox to 1cm{\leftarrowfill}}\limits_{#1}}}
\def\Supp{\operatorname{Supp}}
\def\Spec{\operatorname{Spec}}
\title[Efficient generation of ideals]{Efficient generation of ideals in core subalgebras of the polynomial ring $k[t]$ over a field $k$}
\author{Naoki Endo}
\address{Global Education Center, Waseda University, 1-6-1 Nishi-Waseda, Shinjuku-ku, Tokyo 169-8050, Japan}
\email{naoki.taniguchi@aoni.waseda.jp}
\urladdr{http://www.aoni.waseda.jp/naoki.taniguchi/}
\author{Shiro Goto}
\address{Department of Mathematics, School of Science and Technology, Meiji University, 1-1-1 Higashi-mita, Tama-ku, Kawasaki 214-8571, Japan}
\email{shirogoto@gmail.com}
\author{Naoyuki Matsuoka}
\address{Department of Mathematics, School of Science and Technology, Meiji University, 1-1-1 Higashi-mita, Tama-ku, Kawasaki 214-8571, Japan}
\email{naomatsu@meiji.ac.jp}
\author{Yuki Yamamoto}
\address{Department of Mathematics, School of Science and Technology, Meiji University, 1-1-1 Higashi-mita, Tama-ku, Kawasaki 214-8571, Japan}
\email{yuki.yamamoto5104@gmail.com}
\thanks{2010 {\em Mathematics Subject Classification.} 13A15, 13B25, 13B22.}
\thanks{{\em Key words and phrases.} Numerical semigroup ring, Integral closure of an ideal, Rational closed point}
\thanks{The first author was partially supported by JSPS Grant-in-Aid for Young Scientists (B) 17K14176 and Waseda University Grant for Special Research Projects 2019C-444, 2019E-110. The second author was partially supported by JSPS Grant-in-Aid for Scientific Research (C) 16K05112. The third author was partially supported by JSPS Grant-in-Aid for Scientific Research (C) 18K03227.}
\begin{document}

\maketitle

\setlength{\baselineskip} {14.9pt}

\begin{abstract}
This note aims at finding explicit and efficient generation of ideals in subalgebras $R$ of the polynomial ring $S=k[t]$ ($k$ a field) such that $t^{c_0}S \subseteq R$ for some integer $c_0 > 0$. The class of these subalgebras which we call cores of $S$ includes the semigroup rings $k[H]$ of numerical semigroups $H$, but  much larger than the class of numerical semigroup rings. For $R=k[H]$ and $M \in \Max R$, our result eventually shows that $\mu_{R}(M) \in \{1,2,\mu(H)\}$ where $\mu_{R}(M)$ (resp. $\mu(H)$) stands for the minimal number of generators of $M$ (resp. $H$), which covers in the specific case the classical result of O. Forster--R. G. Swan.
\end{abstract}


\section{Introduction}
This note aims at finding efficient systems of generators for ideals in certain subalgebras $R$ of the polynomial ring $S=k[t]$ with one indeterminate $t$ over a field $k$. The class of subalgebras which this note concerns naturally includes the semigroup rings $k[H]$ of numerical semigroups $H$.

Investigation on the numbers of generators of ideals and modules is one of the classical subjects of great interest in commutative algebra. One of the main problems was to look for a bound,  in terms of the local data, on the minimal number of generators for a finitely generated module $M$ over a commutative Noetherian ring $R$. This problem was solved by O. Forster \cite{F} in 1964, and subsequently in 1967, R. G. Swan \cite{Swan} gave an efficient bound for the number of generators, also generalizing Forster's argument to the non-commutative case. The reader may consult \cite{EE2, EE}, where D. Eisenbud and E. G. Evans, Jr. extended various stability theorems for projective modules to the context of arbitrary finitely generated modules, developing a beautiful theory of basic elements. Let us note one of the results in the following form. Throughout, let $\mu_R(*)$ stand for the minimal number of generators.

\begin{thm}[Forster-Swan Theorem on the number of generators of a module, {\cite[Corollary 3]{EE2}, \cite[Corollary 5]{EE}}]\label{fs}
Let $M$ be a finitely generated module over a finitely generated algebra $R$ over a field and set $$\operatorname{b}(M) = \sup_{\fkp \in \Spec R}[\dim R/\fkp + \mu_{R_\fkp}(M_\fkp)].$$ Then $\mu_R(M) \le \operatorname{b}(M)$.
\end{thm}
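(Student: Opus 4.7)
The plan is to prove the theorem by induction on an invariant that forces $M$ down to zero, using the Eisenbud--Evans theory of \emph{basic elements}. An element $m \in M$ is \emph{basic} at $\fkp$ if its image in $M \otimes_R \kappa(\fkp)$ is nonzero, equivalently $\mu_{R_\fkp}((M/Rm)_\fkp) = \mu_{R_\fkp}(M_\fkp) - 1$. The inductive engine is: at each stage, exhibit an element $m \in M$ that is basic at every prime where $\dim R/\fkp + \mu_{R_\fkp}(M_\fkp)$ attains its supremum among primes of positive local rank. Passing from $M$ to $M/Rm$ then strictly decreases the relevant invariant, and after at most $\operatorname{b}(M)$ such steps the module is reduced to zero, which gives $\mu_R(M) \le \operatorname{b}(M)$.

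The first substantive step is to reduce the basic-element requirement to a \emph{finite} set of primes. By upper semicontinuity of $\fkp \mapsto \mu_{R_\fkp}(M_\fkp)$, each level set $U_k = \{\fkp : \mu_{R_\fkp}(M_\fkp) \ge k\}$ is Zariski closed in $\Spec R$, and Noetherianity gives only finitely many generic points per $U_k$. Call $\fkp$ \emph{critical} if $\dim R/\fkp + \mu_{R_\fkp}(M_\fkp) = \operatorname{b}(M)$; I claim that any critical prime of level $k$ is forced to be a generic point of $U_k$. Indeed, a proper generalization $\fkq \subsetneq \fkp$ inside $U_k$ would satisfy $\dim R/\fkq > \dim R/\fkp$ together with $\mu_{R_\fkq}(M_\fkq) \ge k$, yielding $\dim R/\fkq + \mu_{R_\fkq}(M_\fkq) > \operatorname{b}(M)$, a contradiction. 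Summing over $k = 1, \ldots, \operatorname{b}(M)$ collects the critical primes into a finite set $\Sigma$.

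With $\Sigma$ finite, I would invoke the Eisenbud--Evans general-position lemma: starting from any generating set $m_1, \ldots, m_n$ of $M$, construct $r_2, \ldots, r_n \in R$ so that $m := m_1 + \sum_{i \ge 2} r_i m_i$ is basic at every $\fkp \in \Sigma$ simultaneously. Being basic at a fixed $\fkp$ is a nonempty Zariski-open condition on the coefficient vector modulo $\fkp$, and the inequality $\dim R/\fkp + \mu_{R_\fkp}(M_\fkp) \le \operatorname{b}(M)$ is precisely the compatibility needed to simultaneously avoid all the bad loci across $\Sigma$. The hypothesis that $R$ is a finitely generated algebra over the field $k$ enters through the Jacobson property: Zariski nonvanishing at the generic points of $\Sigma$ can be promoted to nonvanishing globally by choosing elements of $R$ with prescribed behaviour at suitably chosen maximal ideals.

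The principal obstacle is exactly this simultaneous basic-element construction: when $k$ is finite, classical prime avoidance can fail because a finite union of proper subspaces over a finite residue field may exhaust the ambient space. The standard remedy, due to Eisenbud--Evans, replaces scalar avoidance by a polynomial-style argument carried out in $R$ itself (pulled back via a Noether normalization $k[x_1, \ldots, x_d] \hookrightarrow R$), using the dimension budget from $\operatorname{b}(M)$ to guarantee enough independent directions in $R$ to separate the finitely many primes of $\Sigma$. Verifying that the Noether normalization preserves the pertinent dimension count, and that the avoidance chosen at one stage does not spoil basic-ness at a previously handled stratum, is the delicate combinatorial core of the argument.
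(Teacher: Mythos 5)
First, a point of comparison: the paper does not prove Theorem \ref{fs} at all --- it is quoted from Eisenbud--Evans \cite{EE2, EE} as a known result --- so your proposal must stand on its own. Its overall shape is indeed the standard basic-element proof from those papers, and the first half is sound: the reduction to a finite set $\Sigma$ of critical primes is correct as written (any proper generalization $\fkq \subsetneq \fkp$ satisfies $\dim R/\fkq \ge \dim R/\fkp + 1$, so critical primes are minimal points of the closed sets $U_k$, of which there are finitely many). One smaller issue: the induction should be run on $\sup_{\fkp \in \Supp M}\,[\dim R/\fkp + \mu_{R_\fkp}(M_\fkp)]$ rather than on $\operatorname{b}(M)$ as defined over all of $\Spec R$; a minimal prime $\fkp$ with $M_\fkp = 0$ contributes $\dim R/\fkp = \dim R$ to the supremum, and that contribution never decreases when passing to $M/Rm$, so $\operatorname{b}$ itself need not descend to $0$. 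Your phrase ``primes of positive local rank'' gestures at this, but it must be made the actual induction variable (and it is bounded above by $\operatorname{b}(M)$, so the stated inequality still follows).

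The genuine gap is the simultaneous basic-element lemma: you invoke it without proof, and the mechanism you sketch for it in the final paragraph is not the one that works. The finite-residue-field obstruction is not resolved by Noether normalization or the Jacobson property (being basic at $\fkp$ is a condition in $M_\fkp/\fkp M_\fkp$, not something detected at maximal ideals above $\fkp$). The actual remedy is a purely ring-theoretic interleaved avoidance: order $\Sigma = \{\fkp_1, \dots, \fkp_s\}$ by increasing $\dim R/\fkp_i$, so that $\fkp_j \not\subseteq \fkp_i$ whenever $j < i$; if $m$ is basic at $\fkp_1, \dots, \fkp_{i-1}$ but not at $\fkp_i$, choose $m' \in M$ with nonzero image in $M_{\fkp_i}/\fkp_i M_{\fkp_i}$ and $a \in (\fkp_1 \cap \cdots \cap \fkp_{i-1}) \setminus \fkp_i$ (possible by prime avoidance), and replace $m$ by $m + am'$; the perturbation lies in $\fkp_j M$ for each $j < i$, hence preserves basicness there, while creating it at $\fkp_i$. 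This argument needs no hypothesis on the field and no normalization. Without this lemma --- or with the incorrect route to it that you describe --- the induction has no engine, so as written the proof is incomplete at its central step.
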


On the other hand, in 1978 J. Sally \cite{S} explored the case where the base rings are local, giving several fundamental results about the numbers of generators of ideals in Cohen-Macaulay local rings of small dimension. Combining her results with Theorem \ref{fs}, we nowadays have satisfactorily general methods to estimate the numbers of generators for the ideals in rings of small dimension. For example, if $R=k[H] = \sum_{h \in H}kt^h$ is the semigroup ring of a numerical semigroup $H$ where $t$ denotes an indeterminate over a field $k$, then $\Spec R$ has at most an isolated singularity, so that for a maximal ideal $M$ of $R$ we get $\mu_R(M) \le 2$ if $M \ne P_0$ and $\mu_R(P_0)=\mu(H)$, where $P_0=(t^h \mid 0 < h \in H)$ and $\mu(H)$ stands for the minimal number of generators of $H$. Nevertheless, even for the maximal ideals $M$ of $k[H]$ it could be another problem to look for explicit and efficient systems of generators, which we shall pursue in this note.

We explain how this note is organized. In Section \ref{section2} we study $k$-subalgebras $R$ of the polynomial ring $S = k[t]$ over a field $k$ such that $t^{c_0} S \subseteq R$ for some integer $c_0 > 0$. The class of these algebras which we call cores of $S$ naturally includes (but much larger than) the class of numerical semigroup rings. We consider the ideals $I$ of the form $I = fS \cap R$ where $f \in S=k[t]$ such that $f (0) =1$, and show that $I$ is at most $2$-generated, giving explicit generators of $I$ which depend only on $c_0$ and $f$ (Theorem \ref{3.2}). The result leads to the study of the integral closures $\overline{I}$ of arbitrary ideals $I$ of $R$, and in Section 3, we will give an estimation of the minimal number of generators of $\overline{I}$. In Section 4, we shall focus on the case where $R = k[H]$, giving rather incredible systems of generators of $k$-rational closed points $M$ of $\Spec R$ (Theorem \ref{4.2}, Corollary \ref{4.3}), which eventually shows that $\mu_R(M) \in \{2, \mu(H)\}$, if $1 \not\in H$ and the field $k$ is algebraically closed (Corollary \ref{4.4}).

\section{Main result}\label{section2}

Let $S = k[t]$ be the polynomial ring over a field $k$. Let $0 < a_1, a_2, \cdots, a_v \in \Bbb Z$~($v > 0$) be integers such that $\GCD (a_1, a_2, \cdots, a_v)=1$. We denote by $$H = \left<a_1, a_2, \cdots, a_v \right>=\left\{\sum_{i=1}^vc_ia_i \mid 0 \le c_i \in \Bbb Z~~\text{for}~~1 \le \forall i \le v \right\}$$ the numerical semigroup generated by $a_i's$. The reader may consult \cite{RG} for a general reference about numerical semigroups. We set $k[H] = k[t^{a_1}, t^{a_2}, \cdots, t^{a_v}]\subseteq S$ and call it the semigroup ring of $H$. Let
$$\operatorname{c}(H)= \min \{n \in \Bbb Z \mid m \in H~\text{for~all}~m \in \Bbb Z~\text{such~that}~m \ge n\}$$ denote the conductor of $H$. Hence, $k[H]:S = t^{\operatorname{c}(H)}S$. Unless otherwise specified, we throughout assume that $1 \not\in H$. Therefore, $\operatorname{c}(H) \ge 2$, and  $\operatorname{Sing}(k[H]) = \{P_0\}$, where $P_0 =(t^{a_1}, t^{a_2}, \cdots, t^{a_v})=tS \cap k[H]$

Let $R$ be a $k$-subalgebra of $S$. Then we say that $R$ is a {\em core} of $S$, if $t^{c_0}S \subseteq R$ for some integer $c_0 > 0$. If $R$ is a core of $S$, then $$k[t^{c_0}, t^{c_0+1}, \ldots, t^{2c_0 -1}] \subseteq R \subseteq S,$$ and a given $k$-subalgebra $R$ of $S$ is a core of $S$ if and only if $R \supseteq k[H]$ for some numerical semigroup $H$. Therefore, once $R$ is a core of $S$, $R$ is a finitely generated $k$-algebra of dimension one, and $S$ is a birational module-finite extension of $R$ with $t^{c_0}S \subseteq R:S$. Typical examples of cores are, of course, the semigroup rings $k[H]$ of numerical semigroups $H$. However, cores of $S$ do not necessarily arise as semigroup rings for some numerical semigroups. Let us note one of the simplest examples.

\begin{ex}
Let $R = k[t^2+t^3] + t^4S$. Then $R \ne k[H]$ for any numerical semigroup $H$.
\end{ex}

We summarize a few basic facts about cores.
For $P \in \Spec R$, we say that $P$ is a {\it $k$-rational closed point} of $\Spec R$, if $k = R/P$.

\begin{prop}\label{core}
Let $R$ be a core of $S$ and set $P_0 = tS \cap R$.  Then the following assertions hold true.
\begin{enumerate}[${\rm (1)}$]
\item[${\rm (1)}$] Let $Q \in \Spec S$ and set $P = Q \cap R$. If $P \ne P_0$, then $R_P = S_P = S_Q$.
\item[${\rm (2)}$] The map $\Spec S \to \Spec R, Q \mapsto Q\cap R$ is a bijection.
\item[${\rm (3)}$] Let $Q \in \Spec S$ and set $P = Q \cap R$. Then $Q$ is a $k$-rational closed point of $\Spec S$ if and only if so is $P$ in $\Spec R$.
\end{enumerate}
\end{prop}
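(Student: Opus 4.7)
The plan is to leverage the fact that $R\subseteq S$ is a module-finite (hence integral) extension, which follows from $t^{c_0}S\subseteq R$ being a common ideal with $S/t^{c_0}S$ finite-dimensional over $k$. The entire proposition rests on one observation: the element $t^{c_0}$ belongs to $R$ and lies in $P_0=tS\cap R$, but for every prime $P\in\Spec R$ with $P\ne P_0$, one has $t^{c_0}\notin P$. Indeed, if $t^{c_0}\in P$ and $Q\in\Spec S$ lies over $P$ (such $Q$ exists by integrality), then $t\in Q$, forcing $Q=tS$ and thus $P=Q\cap R=P_0$, a contradiction.

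For (1), let $P\ne P_0$, set $W=R\setminus P$, and note $W\subseteq S\setminus Q$, so the inclusions give maps $R_P\hookrightarrow W^{-1}S\hookrightarrow S_Q$. Since $t^{c_0}\in W$ by the observation and $t^{c_0}\cdot S\subseteq R$, every $s\in S$ equals $(t^{c_0}s)/t^{c_0}$ with numerator and denominator in $R$; thus $W^{-1}S=R_P$. This makes $W^{-1}S$ local with maximal ideal $PR_P$, and since $QW^{-1}S$ is a nonzero prime of it, necessarily $QW^{-1}S=PR_P$. For any $s\in S\setminus Q$, the image of $s$ in $W^{-1}S$ lies outside this maximal ideal and is a unit, yielding $W^{-1}S=S_Q$ as well.

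For (2), surjectivity of $\Spec S\to\Spec R$ is immediate from integrality. For injectivity, if $P=(0)$ then any $Q$ over $P$ must equal $(0)$ by the standard lying-over argument for domains. If $P=P_0$, the preliminary observation forces $Q=tS$ uniquely. If $P\ne P_0$ and $P\ne(0)$, then by (1) the ring $R_P=W^{-1}S$ is local, so only one prime of $S$ can lie over $P$.

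For (3), when $Q=(0)$ both primes are non-closed and the equivalence is vacuous. For a nonzero $P\ne P_0$, part (1) yields $R/P=R_P/PR_P=S_Q/QS_Q=S/Q$, so $k$-rationality passes in both directions. For $P=P_0$ and correspondingly $Q=tS$, we have $S/Q=k[t]/(t)=k$, and the composition $k\hookrightarrow R\twoheadrightarrow R/P_0\hookrightarrow S/tS=k$ is the identity on $k$, so $R/P_0=k$ as well. The main obstacle is the clean identification $W^{-1}S=R_P=S_Q$ in (1); parts (2) and (3) then fall out by short routine checks.
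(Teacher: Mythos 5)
Your proposal is correct and follows essentially the same route as the paper: the key point in both is that the conductor element $t^{c_0}$ lies outside every prime $P\ne P_0$, so that $R_P=S_P=S_Q$ (you simply spell out the standard conductor argument that the paper cites as $R:S\not\subseteq P$), after which the bijectivity and the residue-field comparison follow exactly as in the paper. The only cosmetic imprecision is calling $QW^{-1}S$ a ``nonzero prime'' when $Q=(0)$, but the conclusion $QW^{-1}S=PR_P$ and hence $W^{-1}S=S_Q$ holds trivially in that case, so nothing is lost.
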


\begin{proof}
(1) If $t^{c_0}S \subseteq P$, then $t \in Q$, so that $P = P_0$. Therefore, $t^{c_0}S \not\subseteq P$, whence $R_P = S_P$, because $R:S \not\subseteq P$. Consequently, $S_P = S_Q$, since $S_P$ is a local ring and $QS_P \in \Max S_P$.

(2) Since $S$ is integral over $R$, the map $\Spec S \to \Spec R, Q \mapsto Q\cap R$ is surjective. Let $Q_1, Q_2 \in \Spec S$ and assume that $Q_1\cap R = Q_2 \cap R = P$. We will show that $Q_1 = Q_2$. To do this, we may assume that $P \ne (0)$. If $P \ne P_0$, then $Q_1= Q_2$, because by assertion (1) $S_P$ is a local ring and $Q_1S_P, Q_2S_P \in \Max S_P$. If $P=P_0$, then $t^{c_0} \in Q_1 \cap Q_2$, so that $Q_1 = Q_2 = tS$, which proves assertion (2).

(3) We may assume that $P \in \Max R$, $Q \in \Max S$, and by assertion (2) that $P \ne P_0$. Therefore, $R/P = R_P/PR_P$ and $S/Q = S_Q/QS_Q$, whence $R/P = S/Q$, because $R_P = S_Q$ by assertion (1). Thus, $k = R/P$ if and only if $k=S/Q$.
\end{proof}

In what follows, we fix a core $R$ of $S= k[t]$. Let $f \in S=k[t]$ such that $f(0) =1$. Choose integers $\ell, c \ge c_0$ so that $\ell \ge 2$. We write $f = \sum_{i \ge 0}a_it^i$ with $a_i \in k$ (hence $a_0=1$) and consider the following $(\ell -1) \times \ell$ matrix


$$A=\left[\begin{smallmatrix}
a_1       &  1       & 0           & 0      &\cdots  &   \cdots & 0     \\
a_2       &a_1       & 1           & 0      &\cdots  &   \cdots & 0     \\
a_3       &a_2       & a_1         & 1      &   0    &   \cdots & 0     \\
 \vdots   &  \vdots  & \vdots      & \vdots &\vdots  & \vdots   & \vdots\\
a_{\ell-2}&a_{\ell-3}&a_{\ell - 4} & \cdots & a_1    & 1        & 0\\
a_{\ell-1}&a_{\ell-2}&a_{\ell - 3} & \cdots & a_2    & a_1      & 1
\end{smallmatrix}\right].$$


\noindent
Then, $\operatorname{rank}A= \ell - 1$, so that there exists a unique element $\mathbf{v} = \left[\begin{smallmatrix}
v_0\\
v_1\\
v_2\\
 \vdots\\
v_{\ell-1}
\end{smallmatrix}\right] \in k^{(\ell)}$
such that $A\mathbf{v}=\mathbf{0}$ and $v_0 = 1$. Set $g = \sum_{i=0}^{\ell -1}v_it^i \in S$. Hence, $g(0)=1$, and we have the following.

\begin{prop}\label{3.1}
The following assertions hold true.
\begin{enumerate}[${\rm (1)}$]
\item[${\rm (1)}$] $fg -1 \in t^\ell S$, whence $fg \in R$.
\item[${\rm (2)}$]
$f \in R$ if and only if $g \in R$.
\item[${\rm (3)}$] Let $\varphi \in \sum_{i=0}^{\ell -1}kt^i$. If $f\varphi -1 \in t^\ell S$, then $\varphi = g$.
\end{enumerate}
\end{prop}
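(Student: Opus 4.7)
The plan is to decode the linear-algebra content of the defining relations of $\mathbf{v}$, then handle the uniqueness statement (3), and finally deduce (2) from (3) using the local structure of $R/t^\ell S$.

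First I would note that the $(j,k)$-entry of $A$ is $a_{j-k}$ when $0\le k\le j$ (using the convention $a_0=1$) and vanishes for $k>j$. Writing $g=\sum_{k=0}^{\ell-1}v_kt^k$, the system $A\mathbf{v}=\mathbf{0}$ together with $v_0=1$ is then exactly the statement that the coefficients of $1,t,\ldots,t^{\ell-1}$ in the polynomial $fg$ are $1,0,\ldots,0$. This immediately gives $fg-1\in t^\ell S\subseteq t^{c_0}S\subseteq R$, whence $fg\in R$, proving (1). For (3), the same coefficient comparison applied to $\varphi=\sum_{i=0}^{\ell-1}w_it^i$ shows that $f\varphi-1\in t^\ell S$ forces $w_0=1$ and $A\mathbf{w}=\mathbf{0}$. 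The last $\ell-1$ columns of $A$ form a lower triangular square block with ones on the diagonal, so $\operatorname{rank}A=\ell-1$ and $\ker A$ is one-dimensional; the constraint $w_0=v_0=1$ then pins down $\mathbf{w}=\mathbf{v}$, i.e.\ $\varphi=g$.

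The substantive step is (2). The key observation is that $P_0$ is the unique maximal ideal of $R$ containing $t^\ell$: by Proposition~\ref{core} any other maximal ideal of $R$ has the form $Q\cap R$ with $Q\in\Max S\setminus\{tS\}$, and such $Q$ does not contain $t$. Hence $R/t^\ell S$ is local with maximal ideal $P_0/t^\ell S$. Assuming $f\in R$, the condition $f(0)=1$ gives $f\notin P_0$, so $\bar{f}$ is a unit in $R/t^\ell S$; picking $h\in R$ with $fh-1\in t^\ell S$ and dividing $h$ by $t^\ell$ produces $h=\varphi+t^\ell s$ with $\varphi\in\sum_{i=0}^{\ell-1}kt^i$ and $s\in S$. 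Then $f\varphi-1\in t^\ell S$, so (3) gives $\varphi=g$, and hence $g=h-t^\ell s\in R$. The converse is symmetric: since $g(0)=v_0=1$, if $g\in R$ then $\bar g$ is a unit in $R/t^\ell S$, and any $h\in R$ inverting $\bar g$ modulo $t^\ell S$ must agree with $f$ in $S/t^\ell S$ (both being inverses of $\bar g$ there), forcing $f\in R+t^\ell S=R$.

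The main obstacle I anticipate is (2): without the localness of $R/t^\ell S$ one cannot pass from the unit $\bar{f}$ in $S/t^\ell S$ to an inverse actually lying in $R$. This is the one point where Proposition~\ref{core} does real work; the rest is bookkeeping with the coefficient identities encoded by $A$.
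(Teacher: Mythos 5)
Your proof is correct and follows essentially the same route as the paper: coefficient comparison for (1), and for (2) the localness of $R/t^\ell S$ (with maximal ideal $P_0/t^\ell S$) together with the uniqueness of inverses to pull $g$ back into $R$. The only cosmetic difference is that you establish (3) via the one-dimensionality of $\ker A$, whereas the paper simply notes that $\overline{\varphi}$ and $\overline{g}$ are both inverses of $\overline{f}$ in $S/t^\ell S$ — the same fact in different clothing.
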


\begin{proof}
(1) The coefficient $b_n$ of $t^n$ in $fg$ is given by $b_n = \sum_{i=0}^na_iv_{n-i}$, so that $b_n = 0$ for all $1 \le n \le \ell-1$, while $(fg)(0)=f(0){\cdot}v_0=1$. Therefore $$fg -1 \in t^{\ell}S.$$

(2) Let $\overline{R}= R/I$ and $\overline{S} = S/I$, where $I = t^{\ell}S$. Then, $\overline{R}$ is a subring of $\overline{S}$, and $\overline{S}$ is a module-finite extension of $\overline{R}$. Therefore, $\overline{R}$ is a local ring, since so is $\overline{S}$, and $\m_{\overline{R}}=\m_{\overline{S}} \cap \overline{R}$, where $\m_{\overline{R}}$  and $\m_{\overline{S}}$ denote respectively  the maximal ideals of $\overline{R}$ and $\overline{S}$. Then, since $fg \equiv 1 ~\mod~t^\ell S$ in $S$ and $t^\ell S = I$, we have $\overline{f}\overline{g} = 1$ in $\overline{S}$, where $\overline{f}, \overline{g}$ denote respectively the images of $f,g$ in $\overline{S}$. Therefore, if $f \in R$, $\overline{f} \in \overline{R}$ and it is a unit of $\overline{R}$, since it is a unit of $\overline{S}$. Because $\overline{g}$ is the inverse of $\overline{f}$ in the ring $\overline{S}$, which should belong to the ring $\overline{R}$, just thanks to the uniqueness of the inverse. Therefore, $g \in R$, since $g \equiv r ~\mod~I$ in $S$ for some $r \in R$. The converse is similarly proved.

(3) This is clear, since $\overline{\varphi} = \overline{g}$ in $\overline{S}$.
\end{proof}

We set $I = fS \cap R$. Then, $fg \in I$ since $fg \in R$, and $t^cf \in I$ for  every integer $c \ge c_0$ since $t^c S \subseteq R$. Therefore, $(t^cf, fg) \subseteq I$. We furthermore have the following.

\begin{thm}\label{3.2} The  following assertions hold true.
\begin{enumerate}[${\rm (1)}$]
\item[${\rm (1)}$] $I = (t^c f, fg)$ for every integer $c \ge c_0$, and $IS = fS$.
\item[${\rm (2)}$] $I$ is a principal ideal of $R$ if and only if $f \in R$. When this is the case, $I= fR$.
\end{enumerate}
\end{thm}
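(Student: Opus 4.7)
The plan is to prove (1) by explicitly constructing $u, v \in R$ for any $\alpha = fh \in I$, deduce $IS = fS$ as a byproduct, and to prove (2) via a nilpotent-action argument on the $R$-module $S/R$.

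For (1), after cancelling the regular element $f$, the target becomes $h = t^c u + gv$ with $u, v \in R$. I would first choose $v \in R$ modulo $t^c S$. Since $c \ge c_0$, $t^c S$ is an ideal of $R$ contained in $P_0$, and any prime of $R$ containing $t^c S$ corresponds via Proposition~\ref{core}(2) to a prime of $S$ containing $t^c$, which must be $tS$, whose contraction is $P_0$; thus $R/t^c S$ is Artinian local with maximal ideal $P_0/t^c S$. Because $fg - 1 \in t^\ell S \subseteq P_0$, the image $\overline{fg}$ is a unit in $R/t^c S$, and I set $\bar v := \overline{fg}^{-1}\bar\alpha$, lifting arbitrarily to $v \in R$. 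Then $\alpha - (fg)v \in t^c S \cap fS = ft^c S$ (using $\gcd(f, t^c) = 1$ in the PID $S$, which holds since $f(0) = 1$), so $h - gv = t^c u_0$ for some $u_0 \in S$.

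The remaining obstacle is that $u_0$ need not lie in $R$. I would fix this by the syzygy substitution $(v, u_0) \mapsto (v + t^c s,\, u_0 - gs)$ with $s \in S$, which preserves $h = t^c u + gv$ and automatically keeps the new $v$ in $R$ (as $t^c s \in t^c S \subseteq R$). To find $s$ with $u_0 - gs \in R$ I invoke the key lemma $R + gS = S$: the condition $g(0) = 1$ makes $\bar t$, and hence $\bar t^{c_0}$, a unit in $S/gS$, so $t^{c_0}S + gS = S$, and absorbing $t^{c_0}S \subseteq R$ promotes this to $R + gS = S$. Applying the lemma to $u_0$ delivers the needed $s$, completing the decomposition. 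The equality $IS = fS$ is then immediate from $(t^c, g)S = S$, which yields $(t^c f, fg)S = fS$ and sandwiches $IS$ in between.

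For (2), when $f \in R$ the inclusion $fR \subseteq I$ is clear. For the reverse, I consider multiplication by $f$ on $S/R$: this is a well-defined $R$-linear endomorphism because $f \in R$, and $S/R$ is annihilated by $t^{c_0}$ since $t^{c_0}S \subseteq R$. Since $f(0) = 1$, the element $f - 1$ lies in $tS$, so $(f-1)^{c_0} \in t^{c_0}S$ acts as zero on $S/R$, whence $f = 1 + (f-1)$ acts as identity plus a nilpotent and is therefore bijective. Consequently $\{h \in S : fh \in R\} = R$, giving $I = fR$. Conversely, if $I = xR$ is principal with $x \in R$, then $xS = IS = fS$ forces $x = \lambda f$ for a nonzero scalar $\lambda \in k^\times \subseteq R$, whence $f \in R$. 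The decisive technical input throughout is the lemma $R + gS = S$; once in hand, everything else reduces to routine bookkeeping with the Artinian local ring $R/t^c S$ and the unipotent action of $f$ on $S/R$.
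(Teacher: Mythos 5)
Your argument is correct, but it reaches the theorem by a genuinely different route from the paper. The paper factors $f$ into irreducibles, computes $IR_P$ at every prime $P$ containing $t^cf$ to see that $I$ is invertible (Propositions \ref{claim1} and \ref{claim2}), passes to the semilocal ring $W^{-1}R$, and---because $g$ may share an irreducible factor with $f$---must first perturb $g$ to $h=g+t^q\xi$ before concluding $W^{-1}I=fh\cdot W^{-1}R$ and hence $I=(t^cf,fh)=(t^cf,fg)$. You instead argue element by element: given $\alpha=fh\in I$ you invert $\overline{fg}$ in the Artinian local ring $R/t^cS$ (legitimate, since $fg\equiv 1$ mod $P_0$) to strip off the $fg$-component, observe that the remainder lies in $t^cS\cap fS=t^cfS$ because $f(0)=1$, and then use the comaximality $R+gS=S$ (a clean consequence of $t^{c_0}S\subseteq R$ and $g(0)=1$) together with the syzygy substitution $(v,u_0)\mapsto(v+t^cs,\;u_0-gs)$ to push the stray $S$-coefficient of $t^c$ into $R$. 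This bypasses both the irreducible decomposition of $f$ and the perturbation of $g$, and is more constructive. In part (2) you also diverge: where the paper invokes Proposition \ref{3.1}(2) ($f\in R$ iff $g\in R$) to get $fR\subseteq I=(t^cf,fg)\subseteq fR$, you show directly that multiplication by $f=1+(f-1)$ on $S/R$ is unipotent (since $(f-1)^{c_0}\in t^{c_0}S$ annihilates $S/R$), hence injective, giving $fS\cap R=fR$ without using part (1) at all; your converse via $xS=IS=fS$ and $S^{\times}=k^{\times}$ matches the paper's. Both proofs are complete; the paper's buys the local data $IR_{P_i}=fR_{P_i}$ along the way, while yours is shorter and self-contained.
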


Let us divide the proof of Theorem \ref{3.2} into several steps.  We may assume that $f \not\in k$. We fix an irreducible decomposition $f = \prod_{i=1}^nf_i^{e_i}$ of $f$. Hence, ${f_i}'s$ are irreducible polynomials such that $f_i S = f_jS$ only if $i=j$, and ${e_i}'s$ are positive integers. We set $\Lambda = \{1,2,\ldots, n\}$. Let $$Q_i = f_i S, \  P_i = Q_i \cap R~\text{for~each}~i \in \Lambda,\  \text{and}~ P_0 =tS \cap R.$$ Then, $fg \not\in P_0$, since $(fg)(0)= f(0)g(0)=1$, but $fg \in P_i$ for every $i \in \Lambda$, since $f \in Q_i$. Let $P \in \Spec R$ and write $P = Q \cap R$ for some $Q \in \Spec S$. Let $c \ge c_0$ be an integer. Then, $t^cf \in P$ if and only if $t^c f \in Q$, and the latter condition is equivalent to saying that either $t \in Q$ or $f_i \in Q$ for some $i \in \Lambda$. Therefore, setting $\operatorname{V}(t^cf) = \{P \in \Spec R \mid t^cf \in P\}$, we have the following.

\begin{prop}\label{claim1}
The following assertions hold true.
\begin{enumerate}[${\rm (1)}$]
\item[${\rm (1)}$] $fg \in P_i$ for each $i \in \Lambda$, but $fg \not\in P_0$. Consequently, $IR_{P_0} = R_{P_0}$.
\item[${\rm (2)}$] $\operatorname{V}(t^cf) = \{P_0, P_1, \ldots, P_n\}$.
\end{enumerate}
\end{prop}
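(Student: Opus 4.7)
The proposition essentially packages two observations that the preceding paragraph has already isolated, so the plan is to record them carefully and add the localisation consequence.

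For assertion (1), the plan is to argue both containments directly from the factorisation of $f$. Since $f\in Q_i=f_iS$, we get $fg\in Q_i$, and because $fg\in R$ by Proposition \ref{3.1}(1), it follows that $fg\in Q_i\cap R=P_i$ for every $i\in\Lambda$. On the other hand, Proposition \ref{3.1}(1) gives $fg\equiv 1\pmod{t^\ell S}$, so in particular $(fg)(0)=f(0)g(0)=1$, whence $fg\notin tS$ and therefore $fg\notin tS\cap R=P_0$. To pass from this to $IR_{P_0}=R_{P_0}$, I would simply note that $fg\in I$ by construction and that $fg\notin P_0$ makes $fg$ a unit in $R_{P_0}$, so $R_{P_0}=fg\cdot R_{P_0}\subseteq IR_{P_0}\subseteq R_{P_0}$.

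For assertion (2), the strategy is to translate the question into $\Spec S$ via the bijection of Proposition \ref{core}(2). First I would check the containments $\{P_0,P_1,\ldots,P_n\}\subseteq\operatorname{V}(t^cf)$: for $P_i$ with $i\in\Lambda$ one has $t^cf\in Q_i$ because $f\in Q_i$, and intersecting with $R$ gives $t^cf\in P_i$; for $P_0$ one has $t^cf\in tS$ because $c\ge 1$, and $t^cf\in R$ because $c\ge c_0$ forces $t^cS\subseteq R$, so $t^cf\in tS\cap R=P_0$. For the reverse inclusion, take $P\in\operatorname{V}(t^cf)$ and write $P=Q\cap R$ with $Q\in\Spec S$ via Proposition \ref{core}(2). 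Then $t^cf\in P\subseteq Q$, so $Q$ is a nonzero prime of the PID $S=k[t]$, hence generated by an irreducible divisor of $t^cf$. The irreducible divisors of $t^cf$ are, up to units, exactly $t$ and the $f_i$, so $Q\in\{tS,Q_1,\ldots,Q_n\}$ and correspondingly $P\in\{P_0,P_1,\ldots,P_n\}$.

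There is no serious obstacle here: the argument is a bookkeeping exercise relying on Proposition \ref{3.1}(1), Proposition \ref{core}(2), the fact that $S$ is a PID, and the definition of $P_0$ and the $P_i$. The only point that requires a small verification is the containment $t^cf\in P_0$, where one must use both $c\ge 1$ (to put $t^cf$ in $tS$) and $c\ge c_0$ (to put $t^cf$ in $R$); the rest is immediate from the setup.
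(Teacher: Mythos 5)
Your proposal is correct and follows essentially the same route as the paper, whose argument for this proposition is contained in the paragraph immediately preceding its statement: $fg\in P_i$ since $f\in Q_i$ and $fg\in R$, $fg\notin P_0$ since $(fg)(0)=1$, and $\operatorname{V}(t^cf)$ is computed by passing to $\Spec S$ and observing that a prime $Q$ containing $t^cf$ must contain $t$ or some $f_i$. Your write-up is slightly more explicit about where $c\ge c_0$ is needed to ensure $t^cf\in R$, which is a useful clarification but not a different argument.
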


\begin{prop}\label{claim2}
For $i \in \Lambda$ the following assertions hold true.
\begin{enumerate}[${\rm (1)}$]
\item[${\rm (1)}$] $R_{P_i} =S_{Q_i}$ and $IR_{P_i}=fR_{P_i}$, whence $I \subseteq P_i$.
\item[${\rm (2)}$] Let $h \in S$. If  $h \not\in Q_i$, then $IR_{P_i} = fhR_{P_i}$.
\end{enumerate}
Therefore, for each $P \in \Spec R$, $I \subseteq P$ if and only if $P = P_j$ for some $j \in \Lambda$.
\end{prop}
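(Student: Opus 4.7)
The plan is to exploit the fact that each $P_i$ with $i\in\Lambda$ is disjoint from $P_0$, so Proposition \ref{core}(1) collapses the local behavior of $R$ onto that of $S$, a PID. The first step is to observe that $f(0)=1$ forces $t\nmid f$ in $S$; hence each irreducible factor $f_i$ satisfies $Q_i=f_iS\ne tS$, and consequently $P_i\ne P_0$. Proposition \ref{core}(1) then gives $R_{P_i}=S_{P_i}=S_{Q_i}$, which is the fulcrum of everything that follows.

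For the identity $IR_{P_i}=fR_{P_i}$ in (1), I would argue by two containments. The inclusion $I\subseteq fS$ yields $IR_{P_i}\subseteq fS\cdot R_{P_i}=fS_{Q_i}=fR_{P_i}$. Conversely, since $t^{c_0}S\subseteq R$ we have $t^{c_0}f\in fS\cap R=I$; because $t^{c_0}$ is a unit in $R_{P_i}=S_{Q_i}$ (as $t\notin Q_i$), this gives $f\in IR_{P_i}$, i.e.\ $fR_{P_i}\subseteq IR_{P_i}$. The containment $I\subseteq P_i$ is then immediate from $I\subseteq fS\subseteq Q_i$ and $P_i=Q_i\cap R$. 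Assertion (2) is a direct consequence: if $h\notin Q_i$ then $h$ is a unit in $R_{P_i}=S_{Q_i}$, so $fhR_{P_i}=fR_{P_i}=IR_{P_i}$.

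For the final equivalence, one direction is part (1). For the converse, let $P\in\Spec R$ with $I\subseteq P$. Since $t^{c_0}f\in I$, the ideal $I$ is nonzero, so $P\ne(0)$, and via the bijection in Proposition \ref{core}(2) we may write $P=Q\cap R$ for a nonzero $Q\in\Spec S$; as $S=k[t]$ is a PID, $Q=hS$ for some irreducible $h\in S$. From $t^{c_0}f\in Q$ we deduce $h\mid t^{c_0}f$, so either $h$ is a unit multiple of $t$ or $h\mid f$. In the first case $Q=tS$ and $P=P_0$, contradicting $fg\in I$ together with $fg\notin P_0$ from Proposition \ref{claim1}(1); in the second, $h$ is a unit multiple of some $f_i$, so $Q=Q_i$ and $P=P_i$.

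The main potential obstacle is keeping straight the localization of the intersection ideal $fS\cap R$ without assuming $f\in R$ itself; the identification $R_{P_i}=S_{Q_i}$ supplied by Proposition \ref{core}(1) is exactly what trivializes the intersection after localization and makes the elementary PID argument in $S$ applicable to $R$.
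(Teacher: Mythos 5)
Your proof is correct and follows essentially the same route as the paper: the key step in both is the identification $R_{P_i}=S_{P_i}=S_{Q_i}$ from Proposition \ref{core}(1), after which everything reduces to elementary facts in the localized PID. The only (harmless) variations are that you prove $IR_{P_i}=fR_{P_i}$ by two containments using $t^{c_0}f\in I$ rather than by commuting localization with the intersection $fS\cap R$, you get $I\subseteq P_i$ directly from $I\subseteq fS\subseteq Q_i$, and you re-derive the divisibility argument behind Proposition \ref{claim1}(2) inline instead of citing it.
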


\begin{proof}
By Proposition \ref{core} (1) $R_{P_i}= S_{P_i} = S_{Q_i}$, since $P_i \ne P_0$, while $$IR_{P_i}= [fS \cap R]R_{P_i} = fS_{P_i} \cap R_{P_i}.$$ Therefore $IR_{P_i}= fS_{Q_i} \cap S_{Q_i}=fS_{Q_i}=fR_{P_i}$, so that $I \subseteq P_i$ because $fS_{Q_i} \ne S_{Q_i}$. If $h \in S$ but $h  \not\in Q_i$, then $IR_{P_i} = fS_{Q_i}=fhS_{Q_i}=fhR_{P_i}$. Let $P \in \Spec R$. If $I \subseteq P$, then $t^c f \in P$, so that by Proposition \ref{claim1} $P =P_j$ for some $j \in \Lambda$. Since by assertion (1) $I \subseteq P_i$ for every $i \in \Lambda$, the last assertion follows.
\end{proof}

The following is a direct consequence of Proposition \ref{claim2}.

\begin{cor}\label{cor}
$I$ is an invertible ideal of $R$.
\end{cor}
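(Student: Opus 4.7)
The plan is to use the local characterization: a finitely generated ideal in a Noetherian domain is invertible if and only if it is nonzero and locally principal at every prime. Since $R$ is a $k$-subalgebra of the domain $S = k[t]$, the ring $R$ is itself a domain, so any nonzero ideal automatically contains a nonzero divisor.

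First, I would observe that $I \neq (0)$: indeed, the generator $t^{c}f$ belongs to $I$ (as noted before Theorem \ref{3.2}) and is nonzero in $S$, hence in $R$. So the only thing to check is local principality at every $P \in \Spec R$. The plan is to split into three cases according to the description of $\mathrm{V}(I)$ from Proposition \ref{claim2}:
\begin{enumerate}
\item If $P \in \Spec R$ satisfies $P \ne P_j$ for all $j \in \Lambda$, then by Proposition \ref{claim2} we have $I \not\subseteq P$, so $IR_P = R_P$ is trivially principal.
\item If $P = P_0$, then by Proposition \ref{claim1}(1) we have $IR_{P_0} = R_{P_0}$, again principal.
\item If $P = P_j$ with $j \in \Lambda$, then Proposition \ref{claim2}(1) gives $IR_{P_j} = fR_{P_j}$, a principal ideal generated by the nonzero element $f$ in the domain $R_{P_j} = S_{Q_j}$.
\end{enumerate}

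Thus $I$ is locally principal at every prime of $R$, and being a nonzero finitely generated ideal in the one-dimensional Noetherian domain $R$, it is invertible. There is essentially no obstacle here beyond assembling the pieces proved in Propositions \ref{claim1} and \ref{claim2}; the result is just the combined local picture: the localizations of $I$ away from $\{P_0, P_1, \ldots, P_n\}$ are the unit ideal, at $P_0$ the ideal swells up to the unit ideal because $fg$ is a unit there, and at each $P_j$ ($j \in \Lambda$) the extension $R_{P_j} = S_{Q_j}$ is a discrete valuation ring in which $f$ cuts out the principal ideal $IR_{P_j}$.
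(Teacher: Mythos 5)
Your proof is correct and follows exactly the route the paper intends: the paper states the corollary as a ``direct consequence of Proposition \ref{claim2}'', meaning precisely the local-principality check you carry out (unit ideal away from $\{P_1,\dots,P_n\}$, and $IR_{P_j}=fR_{P_j}$ at each $P_j$). You have simply written out the standard criterion that a nonzero finitely generated locally principal ideal in a Noetherian domain is invertible, which is what the authors leave implicit.
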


We set $W = R \setminus \bigcup_{i=0}^nP_i$, Then, $W^{-1}I$ is a principal ideal of $W^{-1}R$, because it is an invertible ideal of $W^{-1}R$ (Corollary \ref{cor}) and the ring  $W^{-1}R$ is a one-dimensional semi-local ring whose maximal ideals are precisely $\{W^{-1}P_i\}_{0 \le i \le n}$. We now notice that $R/(t^cf) = W^{-1}[R/(t^cf)] $, since for every $w \in W$ the image of $w$ in $R/(t^cf)$ is invertible in $R/(t^cf)$ (Proposition \ref{claim1} (2)). Therefore, in order to prove Theorem \ref{3.2} (1), it suffices to show that $W^{-1}I = fg{\cdot}W^{-1}R$, or equivalently $$\Supp_{W^{-1}R}W^{-1}I/fg{\cdot}W^{-1}R = \emptyset.$$ Because $\Max W^{-1}R= \{W^{-1}P_i \mid  0 \le i \le n\}$, by Proposition \ref{claim1} (1) and Proposition \ref{claim2} (2) this is certainly the case, once $g \not\in Q_i$ for any $i \in \Lambda$. Suppose that $g \in Q_i$ for some $i \in \Lambda$, and set $\xi = \prod_{j \in \Gamma}f_j$, where $\Gamma = \{j \in \Lambda \mid g \not\in Q_j\}$. Choose an integer $q$ so that $q \ge c + c_0$ and set  $h = g + t^q\xi$. We then have the following.

\begin{prop}\label{claim3} We have $h \not\in Q_i$ for any $i \in \Lambda$. Hence, $IR_{P_i} = fhR_{P_i}$ for every $i \in \Lambda$.
\end{prop}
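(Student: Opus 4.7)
The plan is to show $h \notin Q_i$ for each $i \in \Lambda$ by a case analysis based on whether $i \in \Gamma$ or not, and then deduce the second statement directly from Proposition \ref{claim2}(2).

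First, I would dispose of the case $i \in \Gamma$. Here $f_i$ appears as one of the factors of $\xi = \prod_{j \in \Gamma} f_j$, so $\xi \in f_i S = Q_i$ and therefore $t^q \xi \in Q_i$. Since $g \notin Q_i$ by the very definition of $\Gamma$, we conclude $h = g + t^q \xi \notin Q_i$.

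Next, the case $i \notin \Gamma$, so $g \in Q_i$. Here I would argue that $t^q \xi \notin Q_i$, so that $h = g + t^q\xi \notin Q_i$ as well. The prime $Q_i = f_i S$ is generated by the irreducible polynomial $f_i$, and there are two things to check: that $t \notin Q_i$, and that no $f_j$ with $j \in \Gamma$ lies in $Q_i$. The former follows because $f(0) = 1$ forces $\prod_i f_i(0)^{e_i} = 1$, so each $f_i(0) \neq 0$ and $f_i$ is not associate to $t$. The latter holds since the $f_j$ (for $j \in \Lambda$) are pairwise non-associate irreducibles by our choice of irreducible decomposition, so $f_j \in Q_i = f_iS$ forces $j = i$, which is excluded when $j \in \Gamma$ and $i \notin \Gamma$. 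Primality of $Q_i$ then gives $\xi = \prod_{j \in \Gamma} f_j \notin Q_i$, and combined with $t^q \notin Q_i$ yields $t^q \xi \notin Q_i$, as required.

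Once $h \notin Q_i$ for every $i \in \Lambda$ has been established, the second assertion is immediate: $h \in S$ and $h \notin Q_i$, so Proposition \ref{claim2}(2) (applied with this $h$) yields $IR_{P_i} = fh R_{P_i}$ for every $i \in \Lambda$. There is no real obstacle here beyond bookkeeping; the essential point is the elementary fact that in the PID $S = k[t]$, the primality of $f_i S$ reduces membership questions to checking irreducible factors, and the assumption $f(0) = 1$ cleanly separates $t$ from all the $f_i$.
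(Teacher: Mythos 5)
Your proof is correct and follows essentially the same route as the paper: in both arguments one checks, according to whether $i \in \Gamma$ or not, that exactly one of $g$ and $t^q\xi$ lies in $Q_i$ (using that $f(0)=1$ rules out $t \in Q_i$ and that the $f_j$ are pairwise non-associate irreducibles), so their sum $h$ cannot lie in $Q_i$. The only difference is cosmetic — you argue directly where the paper argues by contradiction — and the final appeal to Proposition \ref{claim2}(2) is the same.
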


\begin{proof}
Assume the contrary and let $h \in Q_i$ for some $i \in \Lambda$. If $g \in Q_i$,  we then have $t^q \xi \in Q_i$, so that either $t \in Q_i$ or $\xi \in Q_i$. However, if $t \in Q_i$, then $Q_i = tS$, which forces $f(0)=0$ because $f \in Q_i$. Therefore, $\xi \in Q_i$, so that $f_j \in Q_i$ for some $j \in \Gamma$. Hence, $i=j \in \Gamma$, whence $g \not\in Q_i$. This is a contradiction. Thus, $g \not\in Q_i$, that is $i \in \Gamma$, whence $t^q\xi\in Q_i$ so that $h=g + t^q \xi \not\in Q_i$. This is also a contradiction. Hence, $h \not\in Q_i$ for any $i \in \Lambda$. The second assertion is a direct consequence of Proposition \ref{claim2} (2).
\end{proof}

Since $t^{q-c}\xi \in R$ (remember that $q \ge c + c_0$), we have $fh = fg + t^cf{\cdot}t^{q-c}\xi \equiv fg~\mod~t^cfR$, so that $(t^cf, fh) = (t^cf, fg)$ in $R$. On the other hand, because $fh \not\in P_0$ (notice that $(fh)(0)=(fg)(0)=1$) and by Proposition \ref{claim3} $IR_{P_i}= fhR_{P_i}$ for every $i \in \Lambda$, we get $\Supp_{W^{-1}R}W^{-1}I/fh{\cdot}W^{-1}R = \emptyset$, so that $I = (t^cf, fh)$. Therefore, $I = (t^cf,fg)$. Because $(t^c, g)S=S$ and $IS =(t^cf,fg)S= f{\cdot}(t^c, g)S$, we readily get $IS=fS$, which proves assertion (1) of Theorem \ref{3.2}.

Let us consider assertion (2). Suppose that $I=(t^cf, fg)$ is a principal ideal of $R$ and let  $I = \varphi R$ for some $\varphi \in R$. We write $\varphi =f \psi$ with $\psi \in S$. Then, since $\psi R = t^cR+ gR$ and $t^cS + gS=S$, we have $\psi S=S$, so that $0 \ne \psi \in k$. Therefore, $I=\varphi R=f\psi R = fR$, whence $f \in R$. If $f \in R$, then $fR \subseteq I = fS \cap R$, while $g \in R$ by Proposition \ref{3.1}. Consequently, because $t^c, g \in R$, we get $fR \subseteq I = (t^cf, fg) \subseteq fR$. Hence, $I = fR$, which completes the proof of Theorem \ref{3.2}.

\begin{ex}
Let $f = 1-t$. Then, $g = \sum_{i=0}^{\ell-1} t^i$, where $\ell \ge \max\{2, c_0\}$. We set $I = (1-t)S \cap R$. Then, $I$ is a maximal ideal of $R$, and $I=(t^c-t^{c+1}, 1-t^\ell)$ for every $c \ge c_0$. The ideal $I$ is a principal ideal of $R$ if and only if $R=S$.
\end{ex}

\begin{ex}
Let $k = \Bbb Z/(2)$ and $f = 1 + t^2+t^3+t^5+t^6 \in S = k[t]$. Then $f$ is an irreducible polynomial in $S$. Choose a $k$-subalgebra $R$ of $S$ so that $t^{10}S \subseteq R$. Let $I = fS \cap R$ and set $\ell = c = 10$. We then have $g = 1 + t^2+t^3+t^4+t^5+t^6+t^7$ and $I = (t^{10}f, 1+t^{10}+t^{13})$. The maximal ideal $I$ is a principal ideal of $R$ if and only if $f \in R$.
\end{ex}

We consider semigroup rings $R = k[H]$ of numerical semigroups $H$.

\begin{ex}
Let $e \ge 2$ be an integer and set $H = \left<e, e + 1, \cdots, 2e -1\right>$. Hence, $R = k[t^{e +i} \mid 0 \le i \le e-1]$ and $\operatorname{c}(H) = e$. Let $0 \ne \alpha \in k$ and set $f = 1-\alpha t \in S$, $M = fS \cap R$. Then, taking $\ell = c = e$, we have $g = \sum_{i=0}^{e-1} \alpha^it^i$, whence
$$M = (t^{e} -\alpha t^{e +1}, 1-\alpha^e t^e) =\left( \textstyle \left(\frac{1}{\alpha}\right)^{e + 1}-t^{e + 1}, \left(\frac{1}{\alpha}\right)^e -t^e\right).$$
A similar result holds true for $k$-rational closed points except the origin of arbitrary monomial curves $\Spec k[H]$, which we shall discuss in Section 4.
\end{ex}

\begin{ex}\label{3.3}
Let $k = \Bbb R$ and $f = 1 + at + bt^2$, where $a,b \in k$ such that $b \ne 0$ and $a^2 < 4b$. Let $H = \left<2,5\right>$ or $H = \left<4,5,6,7\right>$. Hence $\operatorname{c}(H) = 4$. Let $R = k[H]$. We set $M = fS \cap R$ and choose $\ell = c = 4$. Then
$$
A = \left[\begin{smallmatrix}
a&1&0&0\\
b&a&1&0\\
0&b&a&1
\end{smallmatrix}
\right], \ \ \ \mathbf{v} = \left[\begin{smallmatrix}
1 \\
-a\\
a^2-b\\
2ab-a^3\\
\end{smallmatrix}\right].
$$
We have $fg = 1+ (3a^2b-b^2-a^4)t^4 + (2ab-a^3)bt^5$ and $M = (t^4f,fg)$. We now take $a =0$. Then $g = 1-bt^2$ and $M = (t^4f,1-b^2t^4)$. By Theorem \ref{3.2} (2), we have $M = fR$ if $H =\left<2,5\right>$, but $\mu_R(M) = 2$ if $H = \left<4,5,6,7\right>$. This example shows that even though the generating system of $M = fS \cap R$ depends only on $\operatorname{c}(H)$ and $f$, the whole structure of $H$ has an influence on the minimal number $\mu_R(M)$ of generators for $M$.
\end{ex}

\section{Integral closures $\overline{I}$ of an ideal $I$ in $R$}\label{section3}
Similarly as in Section \ref{section2},  we fix a core $R$ of $S$. Hence, $R$ is a $k$-subalgebra $R$ of $S=k[t]$ such that $t^{c_0}S \subseteq R$ for some integer $c_0 > 0$. Let $I ~(\ne (0))$ be an ideal of $R$. We write $IS = \varphi S$ with $\varphi \in S$. In this section, we are interested in the efficient generation of the integral closure $\overline{I}$ of $I$. To do this, notice that $\overline{I}= IS \cap R$, since $S$ is a module-finite (hence an integral) extension of $R$. Without loss of generality, we may assume that $\varphi = t^q f$, where $q \ge 0$ is an integer and $f \in S$ such that $f(0)=1$. We choose integers $\ell, c \ge c_0$ so that $\ell \ge 2$ in order to obtain the polynomial $g \in S$ explored in Section \ref{section2} (see Proposition \ref{3.1}). Hence $fS \cap R = (t^cf,fg)$ by Theorem \ref{3.2}.

With this notation we have the following.

\begin{lem}\label{lem1}
$\overline{I}= (t^q S \cap R)\cap (fS \cap R) = (t^qS \cap R){\cdot}(fS \cap R)$ and $(t^qS \cap R)S = t^qS$.
\end{lem}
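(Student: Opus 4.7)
The plan is to handle the three assertions in turn, relying on: (i) a coprimality identity in the PID $S=k[t]$ for the first equality; (ii) a spectrum-level comaximality argument for the second equality; and (iii) a gcd computation, fed by an observation about generators of $I$, for the statement about the extension. Since $S$ is integral over $R$, $\overline{I}=IS\cap R=t^qfS\cap R$. The hypothesis $f(0)=1$ forces $t\nmid f$ in $S$, hence $t^q$ and $f$ are coprime in the PID $S$ and $t^qfS=t^qS\cap fS$; intersecting with $R$ gives
\[
\overline{I}=(t^qS\cap R)\cap(fS\cap R).
\]

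For the equality with the product, set $J_1=t^qS\cap R$ and $J_2=fS\cap R$. Comaximality of $J_1$ and $J_2$ suffices, so I would show $V(J_1)\cap V(J_2)=\emptyset$. Proposition \ref{claim2} identifies $V(J_2)=\{P_1,\dots,P_n\}$, where $P_i=f_iS\cap R$ for the distinct irreducible factors $f_i$ of $f$. Since $t^{c_0+q}=t^q\cdot t^{c_0}\in t^qS\cap t^{c_0}S\subseteq J_1$, any $P\in V(J_1)$ lifts via the bijection of Proposition \ref{core} to some $Q\in\Spec S$ containing $t^{c_0+q}$, forcing $Q=tS$ and hence $P=P_0$. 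The hypothesis $f(0)=1$ rules out $f_i=t$ (up to units), so $P_i\ne P_0$ for every $i$, and the intersection of zero-loci is empty.

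For $(t^qS\cap R)S=t^qS$, the inclusion $\subseteq$ is immediate. For $\supseteq$, since $S$ is a PID it is enough to exhibit elements of $J_1$ whose gcd in $S$ is $t^q$. One such element is $t^{c_0+q}\in J_1$; the other I would extract from $I$ itself. Every $i\in I$ is of the form $i=t^qfb_i$ with $b_i\in S$, and if all such $b_i$ satisfied $b_i(0)=0$ we would have $IS\subseteq t^{q+1}fS\subsetneq t^qfS=IS$, a contradiction. So pick $r=t^qfb\in I$ with $b(0)\ne 0$; then $\operatorname{ord}_t r=q$ (using $f(0)=1$) and $r\in J_1$. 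Writing $r=t^qu$ with $u(0)\ne 0$, $u$ is coprime to $t^{c_0}$ in $S$, whence $\gcd(r,t^{c_0+q})=t^q\gcd(u,t^{c_0})=t^q$, so $t^q\in J_1 S$. The main obstacle is this final extraction: everything else is bookkeeping in coprimality and in spectrum disjointness, but obtaining an element of $R$ of $t$-order exactly $q$ requires passing from the extension $IS=t^qfS$ back to honest generators of $I$ and using the precise form of $\varphi=t^qf$, rather than the extension alone.
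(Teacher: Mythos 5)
Your proof is correct, and it splits into a part that mirrors the paper and a part that genuinely departs from it. For the first two equalities you follow the same line as the paper: coprimality of $t^q$ and $f$ in the PID $S$ gives $t^qfS=t^qS\cap fS$, and comaximality of $t^qS\cap R$ and $fS\cap R$ upgrades the intersection to a product. You get comaximality by computing the two zero sets (the primes containing $t^qS\cap R$ lie in $\{P_0\}$ because of $t^{c_0+q}$, while those containing $fS\cap R$ are $P_1,\dots,P_n$ by Proposition \ref{claim2}, none equal to $P_0$); the paper instead argues directly that a maximal ideal $M=N\cap R$ containing both would force $t\in N$ and $fg\in N$, contradicting $(fg)(0)=1$. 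Same content; just note that Proposition \ref{claim2} is stated under the running reduction $f\notin k$, which is harmless here since $f\in k$ with $f(0)=1$ forces $fS\cap R=R$. Where you genuinely diverge is the third assertion. The paper deduces $(t^qS\cap R)S=t^qS$ from the chain $IS\subseteq\overline{I}S\subseteq\overline{IS}=\varphi S=IS$, then expands $\overline{I}S$ via the product decomposition and cancels $fS$ using $(fS\cap R)S=fS$ from Theorem \ref{3.2}. You instead extract from $I$ an element of $t$-adic order exactly $q$ --- if every $i=t^qfb\in I$ had $b(0)=0$ then $IS\subseteq t^{q+1}fS$, contradicting $IS=t^qfS$ --- and conclude by the gcd computation $\gcd(r,t^{c_0+q})=t^q$ inside $(t^qS\cap R)S$. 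Your route is more elementary and self-contained, needing neither the product decomposition nor Theorem \ref{3.2}; the paper's is shorter given the machinery it has already set up.
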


\begin{proof}
We have $t^qfS = t^qS \cap fS$, because $(t^q, f)S= S$, so that the first equality follows, since $\overline{I} = (t^qfS) \cap R$. To see the second equality, it suffices to show $(t^qS \cap R)+(fS \cap R)=R$. Assume the contrary and choose $M \in \Max R$ so that $(t^qS \cap R)+(fS \cap R) \subseteq M$. We write $M = N \cap R$ for some $N \in \Max S$. Then, since $t^{c_0 + q}S \subseteq R$, we get $t^{c_0+q} \in M$, whence $t \in N$. On the other hand, we have $fg \in N$, since $fg \in fS\cap R$ (Proposition \ref{3.1} (1)). Therefore, $t, fg \in N$, which is impossible, because $(fg)(0)=1$. Hence, $(t^qS \cap R)+(fS \cap R)=R$, and the second equality follows.  To see that $(t^qS \cap R)S = t^qS$, notice that $IS =\overline{I}S$, since $$IS \subseteq \overline{I}S \subseteq \overline{IS} = \overline{\varphi S} = \varphi S = IS.$$ We then have $t^qfS = IS=\overline{I}S= [(t^q S \cap R){\cdot}(fS \cap R)]S= [(t^qS \cap R)S]{\cdot}[(fS \cap R)S]$, so that $t^qS=(t^qS \cap R)S$, because $(fS\cap R)S=fS$ by Theorem \ref{3.2}.
\end{proof}

We furthermore have the following.

\begin{prop}\label{prop1}
The following assertions hold true.
\begin{enumerate}[${\rm (1)}$]
\item[${\rm (1)}$] If $c \ge q$, then $\overline{I}= (t^cf)+ fg{\cdot}(t^qS \cap R)$, whence $\mu_R(\overline{I}) \le 1 + \mu_R(t^qS \cap R)$.
\item[${\rm (2)}$] If $q \ge c_0$, then $\overline{I} = \varphi S$ and $\mu_R(\overline{I}) = \mu_R(S)$.
\end{enumerate}
\end{prop}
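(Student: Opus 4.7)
The plan is to combine Lemma \ref{lem1} with Theorem \ref{3.2} to unwind $\overline{I}$ as an explicit product of ideals, and then simplify the two factors.

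For part (1), I start from the identity $\overline{I} = (t^qS \cap R)(fS \cap R)$ of Lemma \ref{lem1} and substitute $fS \cap R = (t^cf, fg)$ from Theorem \ref{3.2}, obtaining
\[
\overline{I} = t^cf{\cdot}(t^qS \cap R) + fg{\cdot}(t^qS \cap R).
\]
The first summand is visibly contained in the principal ideal $(t^cf)R$, and I will upgrade this to equality by showing $t^cf \in \overline{I}$. This is the only place the hypothesis $c \ge q$ is used: since $t^cf = t^{c-q}(t^qf) \in t^qS$ and $t^cf \in R$ (because $c \ge c_0$), the element $t^cf$ lies in $t^qS \cap R$, and it lies tautologically in $fS \cap R$, so by the intersection description $\overline{I} = (t^qS \cap R) \cap (fS \cap R)$ of Lemma \ref{lem1} we have $t^cf \in \overline{I}$. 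This yields $\overline{I} = (t^cf) + fg{\cdot}(t^qS \cap R)$. The bound $\mu_R(\overline{I}) \le 1 + \mu_R(t^qS \cap R)$ is then immediate, since any system of generators $\beta_1, \ldots, \beta_n$ of $t^qS \cap R$ gives the generating system $t^cf, fg\beta_1, \ldots, fg\beta_n$ of $\overline{I}$.

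For part (2), the hypothesis $q \ge c_0$ forces $t^qS \subseteq t^{c_0}S \subseteq R$, so $\varphi S = t^qfS \subseteq t^qS \subseteq R$ and consequently
\[
\overline{I} = IS \cap R = \varphi S \cap R = \varphi S.
\]
The equality $\mu_R(\overline{I}) = \mu_R(S)$ then follows because multiplication by the nonzerodivisor $\varphi \in R$ provides an $R$-linear isomorphism $S \xrightarrow{\sim} \varphi S$.

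The only real obstacle is spotting the membership $t^cf \in t^qS \cap R$ (equivalently, that $c \ge q$ together with $c \ge c_0$ lets $t^cf$ serve simultaneously as a witness in both intersecting ideals); once this small piece of bookkeeping is in place, both assertions reduce to formal consequences of Lemma \ref{lem1} and Theorem \ref{3.2}.
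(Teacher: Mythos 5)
Your proof is correct. For part (1) you take a mildly different route from the paper: you expand the product $(t^qS\cap R){\cdot}(t^cf,fg)$ and then upgrade the first summand to the full principal ideal $(t^cf)$ by checking directly that $t^cf$ lies in both $t^qS\cap R$ and $fS\cap R$, hence in $\overline{I}$. The paper instead works with the intersection description of Lemma \ref{lem1}, applying the modular law to $(t^qS\cap R)\cap\bigl((t^cf)+(fg)\bigr)$ (legitimate because $t^cfR\subseteq t^qS\cap R$ when $c\ge q$) and then converting $(t^qS\cap R)\cap(fg)$ into the product $fg{\cdot}(t^qS\cap R)$ using the comaximality $(t^qS\cap R)+(fg)=R$ established in the proof of Lemma \ref{lem1}. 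Your version avoids both the modular law and the comaximality step, at the cost of the (easy) membership check for $t^cf$; since Lemma \ref{lem1} supplies both the intersection and the product descriptions of $\overline{I}$, the two arguments are otherwise interchangeable. Part (2) is proved exactly as in the paper, and your added observation that multiplication by the nonzerodivisor $\varphi$ gives an $R$-isomorphism $S\cong\varphi S$ correctly justifies $\mu_R(\overline{I})=\mu_R(S)$, a point the paper leaves implicit.
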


\begin{proof}
(1) Since $t^cf \in t^qS \cap R$ and $fS \cap R = (t^cf, fg)$ by Theorem \ref{3.2}, we have by Lemma \ref{lem1} that $$\overline{I}= (t^q S \cap R)\cap (fS \cap R)= t^cfR + [(t^qS \cap R) \cap (fg)],$$ while $(t^qS \cap R) \cap (fg)= (fg){\cdot}(t^qS \cap R)$, since $(t^qS \cap R)+ (fg) = R$ (see the proof of the second equality in Lemma \ref{lem1}). Thus, $\overline{I}= (t^cf)+ fg{\cdot}(t^qS \cap R)$.

(2) We have $\overline{I}= \varphi S \cap R = \varphi S$, since $\varphi \in t^{c_0}S \subseteq R$.
\end{proof}

We consider the case where $R=k[H]$. Let $e \ge 2$ be an integer and set $R=k[H]$, where $H=\left<e, e+1, \ldots, 2e-1\right>$. Then, since $(t^qS \cap R)S=t^qS$ by Lemma \ref{lem1}, we have $q \in H$. Therefore, either $q=0$, or $q \ge e = c_0$, so that Proposition \ref{prop1} shows the following, since $\mu_R(S)=e$.

\begin{cor}
Let $e \ge 2$ be an integer and set $H=\left<e, e+1, \ldots, 2e-1\right>$. Let $R = k[H]$. Then, for each ideal $I~(\ne (0))$ of $R$, we have $\mu_R(\overline{I}) \in \{1,2, e\}$.
\end{cor}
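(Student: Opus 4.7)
The plan is to express any nonzero ideal $I$ via $IS = \varphi S$ with $\varphi = t^q f$, $f(0)=1$, and to show the exponent $q$ is forced to lie in $H$. Because $H = \langle e, e+1, \ldots, 2e-1\rangle$ has conductor $e$, one has $H = \{0\}\cup\{n\in\Bbb Z : n\ge e\}$, so the only possibilities are $q=0$ or $q\ge e=c_0$. These two ranges correspond exactly to the two hypothesis regimes in Theorem \ref{3.2} and Proposition \ref{prop1}(2), which should deliver $\mu_R(\overline{I})\in\{1,2\}$ or $\mu_R(\overline{I})=e$ respectively.

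To justify $q\in H$, I would use Lemma \ref{lem1}: the equality $(t^qS\cap R)S = t^qS$ gives an expression $t^q = \sum r_i s_i$ with $r_i\in t^qS\cap R$ and $s_i\in S$; writing $r_i = t^q u_i$ with $u_i\in S$, the $u_i$ satisfy $\sum u_is_i = 1$, so some $u_i$ has nonzero constant term, and the corresponding $r_i\in R=k[H]$ has lowest-degree monomial a nonzero multiple of $t^q$. Since $R$ is spanned by $\{t^h : h\in H\}$, this forces $q\in H$, as asserted in the text preceding the corollary.

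Now the case split. When $q=0$, we have $\overline{I} = IS\cap R = fS\cap R$, and Theorem \ref{3.2}(1) gives $\overline{I}=(t^cf,fg)$, so $\mu_R(\overline{I})\le 2$; combined with part (2) of the same theorem, $\mu_R(\overline{I})\in\{1,2\}$. When $q\ge e=c_0$, Proposition \ref{prop1}(2) gives $\overline{I} = \varphi S$; as $\varphi$ is a non-zero-divisor, $\overline{I}\cong S$ as $R$-modules, so $\mu_R(\overline{I}) = \mu_R(S)$.

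The remaining content is the identity $\mu_R(S)=e$. The upper bound $\mu_R(S)\le e$ is immediate: the monomials $1, t, t^2, \ldots, t^{e-1}$ generate $S$ over $R$ since every integer $n\ge e$ lies in $H$. For the matching lower bound I would localize at the singular point $P_0 = tS\cap R\in\operatorname{Max}R$. Here $R_{P_0}$ is local with maximal ideal $P_0R_{P_0}$, and $P_0S = (t^e,t^{e+1},\ldots,t^{2e-1})S = t^eS$, so by Nakayama's lemma
\[
\mu_R(S) \;\ge\; \mu_{R_{P_0}}(S_{P_0}) \;=\; \dim_k\bigl(S_{P_0}/t^eS_{P_0}\bigr) \;=\; e,
\]
the last equality since $S_{P_0}/t^eS_{P_0}$ has $k$-basis $\{1,t,\ldots,t^{e-1}\}$. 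The main (modest) obstacle is this dimension computation; the rest is a bookkeeping assembly of Lemma \ref{lem1}, Theorem \ref{3.2}, and Proposition \ref{prop1}.
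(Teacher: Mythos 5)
Your proposal is correct and follows essentially the same route as the paper: deduce $q\in H$ from $(t^qS\cap R)S=t^qS$, split into $q=0$ (Theorem \ref{3.2} gives $\mu\le 2$) and $q\ge e=c_0$ (Proposition \ref{prop1}(2) gives $\mu_R(\overline{I})=\mu_R(S)$), and use $\mu_R(S)=e$. The paper states these steps without detail, whereas you supply the justifications (the constant-term argument for $q\in H$ and the Nakayama computation at $P_0$ for $\mu_R(S)=e$), all of which are sound.
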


\section{Maximal ideals of numerical semigroup rings}
In this section we study the semigroup rings of numerical semigroups.  In what follows, let $0 < a_1, a_2, \cdots, a_v \in \Bbb Z$ be integers such that $\GCD (a_1, a_2, \cdots, a_v)=1$. Let $H = \left<a_1, a_2, \cdots, a_v \right>$, and $S= k[t]$, where $k$ is a field. We consider the ring $$R=k[H] = k[t^{a_1}, t^{a_2}, \cdots, t^{a_v}],$$ assuming that $1 \not\in H$. Therefore, $\operatorname{c}(H) \ge 2$, and  $\operatorname{Sing}(R) = \{P_0\}$, where $P_0 =(t^{a_1}, t^{a_2}, \cdots, t^{a_v})=tS \cap R.$

Let $M \in \Spec R$. Recall that $M$ is said to be a {\it $k$-rational closed point} of $\Spec R$, if $k = R/M$. As for the rationality in $\Spec R$, the following result is well-known, which allows us to naturally identify the $k$-rational closed points of $\Spec R$ with the points of the monomial curve $\calC =\{(\alpha^{a_1},\alpha^{a_2}, \ldots, \alpha^{a_v}) \mid \alpha \in k\}$.

\begin{prop}\label{2.3}
Let $M \in \Max R$. Then the following conditions are equivalent.
\begin{enumerate}[${\rm (1)}$]
\item[${\rm (1)}$] $M$ is a $k$-rational closed point of $\Spec R$.
\item[${\rm (2)}$] $M = (\alpha^{a_i}- t^{a_i} \mid 1 \le i \le v)$ for some $\alpha \in k$.
\item[${\rm (3)}$] $M \subseteq (\alpha - t)S$ for some $\alpha \in k$.
\end{enumerate}
When this is the case, the element $\alpha \in k$ given in conditions $(2)$ and $(3)$ is uniquely determined for $M$.
\end{prop}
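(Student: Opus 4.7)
The plan is to close the cycle $(1) \Rightarrow (3) \Rightarrow (2) \Rightarrow (1)$, using Proposition~\ref{core} as the bridge between $\Spec R$ and $\Spec S$, and to read off uniqueness of $\alpha$ from the bijectivity of this bridge.

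For $(1) \Rightarrow (3)$, I would take the unique $Q \in \Spec S$ lying over $M$ guaranteed by Proposition~\ref{core}~(2). Integrality of $S$ over $R$ promotes $Q$ to a maximal ideal, and Proposition~\ref{core}~(3) shows that $Q$ is itself $k$-rational. Since the $k$-rational closed points of $\Spec k[t]$ are exactly the ideals $(\alpha - t)S$ for $\alpha \in k$, this yields $M \subseteq Q = (\alpha - t)S$.

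For $(3) \Rightarrow (2)$, set $N = (\alpha^{a_i} - t^{a_i} \mid 1 \le i \le v)$. Writing $t^{a_i} - \alpha^{a_i} = (t - \alpha)\sum_{j=0}^{a_i - 1}\alpha^{a_i - 1 - j}t^j$ places each generator of $N$ inside $(\alpha - t)S \cap R$, so $N \subseteq (\alpha - t)S \cap R$. This latter ideal is proper (it does not contain $1$) and contains $M$ by hypothesis, so maximality of $M$ forces $M = (\alpha - t)S \cap R \supseteq N$. To promote this containment to equality, I would compute $R/N$: since $R = k[t^{a_1}, \ldots, t^{a_v}]$ is generated as a $k$-algebra by the symbols $t^{a_i}$, each of which reduces modulo $N$ to the scalar $\alpha^{a_i} \in k$, the quotient $R/N$ is generated over $k$ by elements of $k$; it is nonzero because $N \subseteq M$ is proper, hence $R/N = k$. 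Thus $N$ is itself a maximal ideal, and the inclusion $N \subseteq M$ between two maximal ideals is then forced to be equality.

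The implication $(2) \Rightarrow (1)$ is immediate from the same residue-field computation. Finally, for uniqueness, if both $\alpha, \beta \in k$ satisfy (3), then $(\alpha - t)S$ and $(\beta - t)S$ are prime ideals of $S$ both lying over $M$, so Proposition~\ref{core}~(2) forces $(\alpha - t)S = (\beta - t)S$ and hence $\alpha = \beta$ by comparison of degrees and leading coefficients. I do not anticipate any serious obstacle here; the only mildly substantive step is the $k$-algebra generation argument giving $R/N = k$, which is really a one-line observation once one recalls that $k \subseteq R$ and that $R = k[t^{a_1}, \ldots, t^{a_v}]$.
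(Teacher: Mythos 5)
Your proof is correct. The paper states Proposition~\ref{2.3} without proof, describing it as well-known, so there is no argument in the text to compare against; your cycle $(1)\Rightarrow(3)\Rightarrow(2)\Rightarrow(1)$, resting on Proposition~\ref{core} for the passage to $\Spec S$ and on the one-line computation $R/N=k$ (the surjection $k\to R/N$ onto a nonzero ring), together with the uniqueness of $\alpha$ read off from the bijection in Proposition~\ref{core}~(2), is a complete and standard verification.
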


Let us consider $k$-rational closed points of $\Spec R$ which do not correspond to the origin of the curve $\calC =\{(\alpha^{a_1},\alpha^{a_2}, \ldots, \alpha^{a_v}) \mid \alpha \in k\}$. We begin with the following.

\begin{thm}\label{4.2}
Let $M= (1 - t^{a_i} \mid 1 \le i \le v)$. Then $$M = (1 - t^c \mid c \in H) = (1 -t^a, 1 - t^b)$$
for all $0 < a, b \in H$ such that $\operatorname{GCD}(a,b)=1$.
\end{thm}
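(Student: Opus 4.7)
The plan is to prove the two claimed equalities in succession, exploiting the single arithmetic identity
$$1 - t^{m+n} = (1 - t^m) + t^m(1 - t^n), \qquad m, n \ge 0,$$
together with the fact that $\gcd(a,b) = 1$ forces the numerical semigroup $\langle a, b\rangle$ to be cofinite in $\mathbb{Z}_{\ge 0}$.

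First I would establish $M = (1 - t^c \mid c \in H)$. The inclusion $\subseteq$ is immediate from the definition. For $\supseteq$, it suffices to show $1 - t^c \in M$ for every $c \in H$, which follows by induction on a representation $c = \sum_{i=1}^{v} c_i a_i$ with $c_i \in \mathbb Z_{\ge 0}$: the base case is a generator, and the inductive step uses the displayed identity together with the observation that $t^m \in R$ whenever $m \in H$.

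Next, set $J = (1 - t^a, 1 - t^b)$ with $a, b \in H$ and $\gcd(a, b) = 1$. The inclusion $J \subseteq M$ is clear because $1 - t^a$ and $1 - t^b$ are among the generators appearing in the first equality. For the reverse $M \subseteq J$, by the first equality I only need $1 - t^c \in J$ for each $c \in H$. Applying the same induction as before (with $\{a, b\}$ in place of $\{a_1,\dots,a_v\}$), I first obtain $1 - t^d \in J$ for every $d \in \langle a, b\rangle$. For a general $c \in H$, using cofiniteness of $\langle a, b\rangle$ I can choose $N \in \langle a, b\rangle$ large enough that $c + N \in \langle a, b\rangle$ as well. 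Then I would split
$$1 - t^c \;=\; t^N(1 - t^c) + (1 - t^N)(1 - t^c) \;=\; \bigl[(1 - t^{c+N}) - (1 - t^N)\bigr] + (1 - t^N)(1 - t^c),$$
and observe that the first bracketed term lies in $J$ because both $1 - t^{c+N}$ and $1 - t^N$ do, while the final summand lies in $J$ because $1 - t^N \in J$ and $1 - t^c \in R$.

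The only potential obstacle is the passage from \emph{``$J$ controls $\langle a, b\rangle$-exponents''} to \emph{``$J$ controls all $H$-exponents''}. The key trick is to multiply $1 - t^c$ by the partition of unity $1 = t^N + (1 - t^N)$ for a well-chosen $N \in \langle a, b\rangle$; this converts the exponent $c \in H\setminus \langle a, b\rangle$ into exponents that are either in $\langle a, b\rangle$ or already carry a factor lying in $J$. Everything else is a routine induction, so I expect no further complications.
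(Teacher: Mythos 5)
Your proof is correct, and it takes a noticeably different route from the paper's. The difference is twofold. First, the logical skeleton: the paper never proves $(1-t^c \mid c \in H) \subseteq M$ directly; it establishes the single chain $M \subseteq (1-t^c \mid c\in H) \subseteq (1-t^a,1-t^b) \subseteq (1-t)S\cap R$ and closes the cycle by invoking Proposition \ref{2.3}, which identifies $(1-t)S\cap R$ with $M$. You instead prove both equalities by direct double inclusion, so your argument is self-contained and does not use Proposition \ref{2.3} at all. Second, the key computation: the paper writes $c=(-n)a+mb$ with $n,m\ge 2$ (possible since $\gcd(a,b)=1$) and telescopes upward in steps of $a$, obtaining $1-t^c=(1-t^{mb})+(1-t^a)\sum_{q=1}^{n}t^{(q-1)a+c}$, where each coefficient $t^{(q-1)a+c}$ lies in $R$ because $c\in H$ and $a\in H$; your partition-of-unity identity $1-t^c=\bigl[(1-t^{c+N})-(1-t^N)\bigr]+(1-t^N)(1-t^c)$ achieves the same shift of the exponent into $\langle a,b\rangle$ in a single step, at the cost of a preliminary induction showing $1-t^d\in J$ for all $d\in\langle a,b\rangle$ (which the paper gets essentially for free from the geometric series $1-t^{mb}=(1-t^b)\sum_{q=0}^{m-1}(t^b)^q$). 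Both arguments hinge on the same arithmetic fact --- coprimality of $a$ and $b$ lets you push $c$ into $\langle a,b\rangle$ --- but yours is the more elementary and self-contained version, while the paper's is shorter because Proposition \ref{2.3} absorbs one of the two nontrivial inclusions.
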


\begin{proof}  Let $0 < c \in H$. Without loss of generality, we may assume that $c = (-n)a + mb$ with $n,m \ge 2$. Then, since $$\sum_{q=1}^n\left[1 - t^{(q-1)a+c}\right]= \sum_{q=1}^n\left[(1-t^{qa+c}) - (1-t^a)t^{(q-1)a + c}\right],$$ we have
$$1-t^c= (1-t^{na+c})+(1-t^a){\cdot}\sum_{q=1}^{n}t^{(q-1)a +c}.$$ Consequently, because $1-t^{na+c} = 1-t^{mb} = (1-t^b)\sum_{q=0}^{m-1}(t^b)^q \in (1-t^b)$, we get $1-t^c \in (1-t^a, 1-t^b)$. Therefore
$$M \subseteq (1-t^c \mid c \in H) \subseteq (1-t^a, 1-t^b) \subseteq (1-t)S \cap R,$$
whence the required equalities follow, because $M=(1-t)S \cap R$  by Proposition \ref{2.3}.
\end{proof}

\begin{ex}
Let $H=\left<3,5,7\right>$ and set $M = (1-t^c \mid c \in H)$. Then, $M$ is a maximal ideal of $R=k[t^3, t^5,t^7]$ and $M = (1-t^3,1-t^5)=(1-t^3,1-t^7)=(1-t^5,1-t^7)$.
\end{ex}


Let $0 \ne \alpha \in k$ and set $M_\alpha=(\alpha^{a_i} - t^{a_i} \mid 1 \le i \le v)$ in $R=k[H]$. Let $\varphi_\alpha \in \Aut_kk[t]$ defined by $\varphi_\alpha (t) = t_1$, where $t_1 = \frac{t}{\alpha}$. Then, $\varphi_\alpha(k[H])=k[H]$, so that $\varphi_\alpha$ induces the automorphism $\psi_\alpha$ of $R$. Therefore, since $\psi_\alpha(M_1)=M_\alpha$, we get the following.

\begin{cor}\label{4.3}
Let $0 \ne \alpha \in k$. Then $M_\alpha = (\alpha^a - t^a, \alpha^b -t^b)$ for all $0 < a, b \in H$ such that $\operatorname{GCD}(a,b)=1$.
\end{cor}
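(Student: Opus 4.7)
The plan is to reduce the assertion to Theorem \ref{4.2} by transporting the equality there through the $k$-algebra automorphism $\psi_\alpha$ of $R = k[H]$ introduced just before the corollary. Recall that $\psi_\alpha$ is the restriction to $R$ of the automorphism $\varphi_\alpha \in \Aut_k k[t]$ sending $t \mapsto t/\alpha$; this restriction is well-defined because for each $i$ one has $\varphi_\alpha(t^{a_i}) = \alpha^{-a_i}t^{a_i} \in R$, since $\alpha^{-a_i}$ lies in $k \subseteq R$, and a symmetric calculation shows that $\varphi_\alpha^{-1}$ also stabilises $R$.

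First I would compute the effect of $\psi_\alpha$ on every element of the form $1 - t^c$ with $c \in H$:
$$\psi_\alpha(1 - t^c) \;=\; 1 - \alpha^{-c}t^c \;=\; \alpha^{-c}(\alpha^c - t^c).$$
Because $\alpha^{-c}$ is a unit of $R$, the principal ideal $(\psi_\alpha(1-t^c))$ of $R$ coincides with the principal ideal $(\alpha^c - t^c)$. Applying this observation to each generator separately, I would conclude that
$$\psi_\alpha(M_1) = M_\alpha \qquad\text{and}\qquad \psi_\alpha\bigl((1-t^a, 1-t^b)\bigr) = (\alpha^a - t^a, \alpha^b - t^b).$$

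Since $\psi_\alpha$ is a ring automorphism of $R$, it preserves the lattice of ideals. Applying it to the equality $M_1 = (1-t^a, 1-t^b)$ supplied by Theorem \ref{4.2} (under the hypothesis $\operatorname{GCD}(a,b)=1$ with $a,b \in H$), I would therefore read off
$$M_\alpha \;=\; \psi_\alpha(M_1) \;=\; \psi_\alpha\bigl((1-t^a, 1-t^b)\bigr) \;=\; (\alpha^a - t^a, \alpha^b - t^b),$$
which is exactly the desired identity.

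There is essentially no obstacle here: the entire content of the corollary is carried by Theorem \ref{4.2}, and the present statement only twists that result via the one-parameter family of scalings $t \mapsto t/\alpha$ acting on the monomial curve $\calC$. The single point of caution is the rescaling of each $\psi_\alpha(1-t^c)$ by the unit $\alpha^c$ to recover exactly the generators appearing in the definition of $M_\alpha$, which is why the hypothesis $\alpha \neq 0$ is indispensable.
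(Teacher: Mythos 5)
Your proposal is correct and is precisely the argument the paper itself uses: the corollary is deduced from Theorem \ref{4.2} by transporting the equality $M_1=(1-t^a,1-t^b)$ through the automorphism $\psi_\alpha$ induced by $t\mapsto t/\alpha$, with the generators rescaled by the units $\alpha^c$. Your write-up simply makes explicit the verification $\psi_\alpha(1-t^c)=\alpha^{-c}(\alpha^c-t^c)$ that the paper leaves to the reader.
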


Let $\mu(H)$ stand for the minimal number of generators of $H$. Hence $\mu(H) = \mu_R(P_0)$. Every $M \in \Max R$ is a $k$-rational closed point of $\Spec R$ if the base field $k$ is algebraically closed, so that by Theorem \ref{3.2} (2) and Corollary \ref{4.3} we readily get the following.

\begin{cor}[cf. Theorem \ref{fs}]\label{4.4}
Suppose that $k$ is an algebraically closed field. Then, $\mu_R(M) =2$ if $M \ne P_0$, and $\mu_R(M) = \mu(H)$ if $M = P_0$.
\end{cor}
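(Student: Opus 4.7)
The plan is to split into the two cases $M = P_0$ and $M \neq P_0$, and then assemble the upper and lower bounds on $\mu_R(M)$ in the second case.

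For $M = P_0$, I would exploit the natural $\mathbb{N}$-grading on $R = \bigoplus_{h \in H} k t^h$, in which $P_0 = \bigoplus_{0 < h \in H} k t^h$ is the graded maximal ideal and $R/P_0 \cong k$. The squared ideal is $P_0^2 = \bigoplus k t^h$ summed over those $h \in H$ admitting a decomposition $h = h_1 + h_2$ with $0 < h_1, h_2 \in H$. Nakayama's lemma then gives $\mu_R(P_0) = \dim_k P_0/P_0^2$, and this quotient has a $k$-basis indexed by the minimal positive semigroup generators of $H$; hence $\mu_R(P_0) = \mu(H)$.

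For $M \neq P_0$, I would first invoke the Nullstellensatz: since $R$ is a finitely generated $k$-algebra and $k$ is algebraically closed, every $M \in \Max R$ is $k$-rational. Proposition \ref{2.3} then writes $M = M_\alpha$ for a unique $\alpha \in k$, and the case $\alpha = 0$ is exactly $P_0$, so we may take $\alpha \neq 0$. For the upper bound, I would apply Corollary \ref{4.3} to any pair $0 < a, b \in H$ with $\gcd(a,b) = 1$; such a pair exists because every integer $\geq \operatorname{c}(H)$ belongs to $H$, so one may take $a = \operatorname{c}(H)$ and $b = \operatorname{c}(H) + 1$, which are coprime consecutive integers. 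This yields $M_\alpha = (\alpha^a - t^a,\ \alpha^b - t^b)$, so $\mu_R(M) \leq 2$.

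For the matching lower bound, set $f = 1 - \alpha^{-1} t \in S$, so that $fS = (\alpha - t)S$ and therefore $M = (\alpha - t)S \cap R = fS \cap R$ by Proposition \ref{2.3}(3). Since $f(0) = 1$, Theorem \ref{3.2}(2) tells me that $M$ is principal if and only if $f \in R$. But every element of $R = k[H]$ is a $k$-linear combination of monomials $t^h$ with $h \in H \cup \{0\}$; since $1 \notin H$, the coefficient of $t$ in any element of $R$ must vanish, and as $-\alpha^{-1} \neq 0$ this forces $f \notin R$. Hence $M$ is not principal, and combined with the upper bound we conclude $\mu_R(M) = 2$.

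The main obstacle is not computational but rather identifying the correct leverage point: one must recognize that the non-principality of $M$ is captured exactly by the criterion of Theorem \ref{3.2}(2), and observe that the standing hypothesis $1 \notin H$ — which keeps $t$ out of $R$ — is precisely what obstructs $f = 1 - \alpha^{-1} t$ from lying in $R$. Once this is seen, everything else is a direct citation of Proposition \ref{2.3}, Corollary \ref{4.3}, and Theorem \ref{3.2}(2).
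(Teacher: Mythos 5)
Your proof is correct and follows the same route as the paper: the upper bound $\mu_R(M)\le 2$ comes from Corollary \ref{4.3} (with a coprime pair such as $a=\operatorname{c}(H)$, $b=\operatorname{c}(H)+1$), non-principality comes from Theorem \ref{3.2}(2) together with $1\notin H$ forcing $1-\alpha^{-1}t\notin R$, and the case $M=P_0$ is the standard graded Nakayama computation $\mu_R(P_0)=\dim_k P_0/P_0^2=\mu(H)$ that the paper takes for granted. You have merely spelled out the details the authors compress into one sentence.
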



As another application of Theorem \ref{4.2}, we have an explicit system of generators for the integral closures of  certain ideals in $R$. Let us  perform the task before closing this note. In what follows, let $I$ be an ideal of $R$. Similarly as in Section \ref{section3}, we write $IS = \varphi S$ with $\varphi \in S$, and assume that $\varphi = t^qf$, where $0 < q \in H$ and $f \in S$ with $f(0)=1$. Let $\fka = t^q S \cap R$. Hence, $\overline{I}= \fka{\cdot}(fS \cap R)$ by Lemma \ref{lem1}.  We set $n = \mu_R(\fka)$ and write $\fka= (t^{b_1}, t^{b_2}, \ldots, t^{b_n})$ with $b_i \in H$ such that $b_i \ge q$ for each $1 \le i \le n$.

We need  the following.

\begin{prop}
$n \ge 2$.
\end{prop}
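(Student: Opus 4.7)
The plan is to argue by contradiction. Suppose $n = 1$, so $\fka = (t^{b_1})$ for a single $b_1 \in H$ with $b_1 \ge q$. I would first pin down $b_1 = q$ as follows: Lemma \ref{lem1} gives $\fka S = t^q S$, while from $\fka = (t^{b_1})$ we get $\fka S = t^{b_1} S$; comparing these principal ideals inside $S = k[t]$ forces $b_1 = q$, and hence $\fka = t^q R$.

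The remaining step is to exhibit a monomial in $\fka = t^q S \cap R$ that does not lie in $t^q R$, producing the required contradiction. Let $F = \operatorname{c}(H) - 1$. By definition of the conductor, $F \notin H$, and $F \ge 1$ because the hypothesis $1 \notin H$ forces $\operatorname{c}(H) \ge 2$. Put $c := q + F$. Since $q \ge 1$, we have $c \ge F + 1 = \operatorname{c}(H)$, so $c \in H$ and hence $t^c \in R$; as clearly $t^c \in t^q S$ as well, we obtain $t^c \in \fka$. On the other hand, any expression $t^c = t^q \cdot r$ with $r \in R$ would force $r = t^F$ in $S$, and $t^F \notin R$ because $F \notin H$. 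Therefore $t^c \in \fka \setminus t^q R$, which contradicts $\fka = t^q R$, so $n \ge 2$.

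I do not expect any real obstacle. The argument is just two short steps: identifying the hypothetical generator as $t^q$ via Lemma \ref{lem1}, and exhibiting a single missing monomial, for which the Frobenius gap of $H$ is the obvious witness. The only mild subtlety is reducing to a monomial generator, but this has already been built into the setup preceding the statement (the $t^{b_i}$ are monomials with $b_i \in H$, $b_i \ge q$), so no separate graded-Nakayama argument is needed.
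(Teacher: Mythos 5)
Your proof is correct, and it takes a genuinely different --- and noticeably shorter --- route than the paper's. Both arguments start the same way: $n=1$ together with $(t^qS\cap R)S=t^qS$ from Lemma \ref{lem1} forces $b_1=q$ and hence $\fka=t^qR$, i.e.\ every $h\in H$ with $h\ge q$ must lie in $q+H$. The paper refutes this with two nested minimality arguments built on the multiplicity $e=\min\,[H\setminus\{0\}]$: it first shows $e$ divides $q$, then takes the least element $q_1$ of $\{h\in H\mid h>q,\ h\not\equiv 0 \bmod e\}$ and shows $q_1-e$ is a smaller such element. You instead produce an explicit witness: with $F=\operatorname{c}(H)-1$ the Frobenius number of $H$ (so $F\notin H$, and $F\ge 1$ precisely because $1\notin H$), the exponent $q+F\ge \operatorname{c}(H)$ lies in $H$ while $(q+F)-q=F$ does not, so $t^{q+F}\in\fka\setminus t^qR$. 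All the small steps check out: $t^{q+F}=t^q\cdot r$ forces $r=t^F$ in the domain $S$, and $t^F\notin R$ since the monomials $t^h$, $h\in H$, span $R$. Your version has the added virtue of isolating exactly where the standing hypothesis $1\notin H$ enters (the statement is false for $H=\mathbb{N}$, where $R=S$ and $\fka$ is principal), whereas in the paper's argument that hypothesis is used only implicitly through $e\ge 2$.
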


\begin{proof}
Assume that $n = 1$. Then, $\fka = t^qR$, whence $\{h \in H \mid h \ge q\} \subseteq q + H$. Let $e = \min~[H \setminus \{0\}]$ and write $q = me + \alpha$ with integers $m,\alpha$ such that $m>0$ and $0 \le \alpha <e$. We then have $\alpha =0$. In fact, if $\alpha > 0$, then since $(m+1)e > q$ and $(m+1)e \in H$, we get $(m+1)e = q + h$ for some $h \in H$, so that $e = \alpha +h$. Hence,  $0 < h = e-\alpha <e$, which contradicts the minimality of $e$. Thus, $q = me$. Let $\Lambda = \{h \in H \mid q < h, h \not\equiv 0~\mod~e\}$. We set $q_1 = \min~\Lambda$ and write $q_1 = m_1 e + \alpha_1$ with integers $m_1, \alpha_1$ such that $0 < \alpha_1 < e$. Then, $m < m_1$. In fact, since $q = me < q_1$, we get $m \le m_1$. If $m = m_1$, then $q_1 = q + \alpha_1$, so that $\alpha_1 \in H$, since $q+\alpha_1 = q + h$ for some $h \in H$. This violates the minimality of $e$. Therefore, $m < m_1$, whence $q< q_1 - e$, because $q_1 -e = (m_1-1)e + \alpha_1 \ge me + \alpha_1 > q$. We now consider the fact that $q_1 = m_1e + \alpha_1 = q + h$ with $h \in H$. Then, since $q = me$, we have $(m-1)e + h= q + h -e = q_1 -e$, so that  $q_1 - e \in H$ and $q_1 - e\equiv q_1 \equiv \alpha_1 ~\mod~e$. Therefore, $q_1 - e \in \Lambda$, which still contradicts the minimality of $q_1$. Thus, $n \ne 1$. 
\end{proof}

Let us consider the specific case where $f = 1-t$. We set $\fkb = (t^{b_2}, t^{b_3},\ldots, t^{b_n})$. Then, $R/\fkb$ is an Artinian local ring with maximal ideal $P_0/\fkb$  (see Proposition \ref{core}), whence $t^mS \subseteq \fkb$ for all $m \gg 0$. Therefore, we can choose an integer $b \in H$ so that $t^b \in \fkb$ and $\operatorname{GCD}(b_1,b)=1$. Hence $b \ge q$.  Let $b_0 = b$. We then have the following.

\begin{thm}\label{4.7}
$\overline{I}= (t^{b_1}-t^{b_0}, \{t^{b_i} - t^{b_0+b_i}\}_{2 \le i \le n})$. Hence $\mu_R(\overline{I}) \le \mu_R(\fka)$.
\end{thm}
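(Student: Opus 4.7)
\medskip

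\textbf{Proof proposal.} My plan is to start from the equality $\overline{I}=\fka\cdot(fS\cap R)$ given by Lemma \ref{lem1} and apply Theorem \ref{4.2} with $f=1-t$. Since $b_0,b_1 \in H$ and $\GCD(b_0,b_1)=1$, Theorem \ref{4.2} gives $(1-t)S\cap R=(1-t^{b_0},1-t^{b_1})$. Hence
$$\overline{I}=\fka\cdot(1-t^{b_0},1-t^{b_1})=\bigl(\, t^{b_i}(1-t^{b_0}),\ t^{b_i}(1-t^{b_1})\ \big|\ 1\le i\le n\,\bigr).$$
Writing $J=(t^{b_1}-t^{b_0},\{t^{b_i}-t^{b_0+b_i}\}_{2\le i\le n})$, the strategy is to prove $J=\overline{I}$ by two inclusions.

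For $J\subseteq\overline{I}$, each listed generator lies in $\overline{I}=t^q(1-t)S\cap R$: it is clearly in $R$, it is divisible by $t^q$ since $b_0,b_1,\ldots,b_n\ge q$, and it vanishes at $t=1$, so it is divisible by $1-t$.

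The heart of the argument is the reverse inclusion $\overline{I}\subseteq J$, i.e.\ showing that each of the $2n$ displayed generators of $\fka\cdot(1-t^{b_0},1-t^{b_1})$ belongs to $J$. The generators $t^{b_i}(1-t^{b_0})=t^{b_i}-t^{b_0+b_i}$ with $i\ge 2$ are already in $J$ by definition. For the others I will exploit the fact that $t^{b_0}=t^b\in\fkb=(t^{b_2},\ldots,t^{b_n})$, so one can write
$$t^{b_0}=\sum_{i=2}^{n}c_i\,t^{b_i}\qquad(c_i\in R).$$
First, for $i\ge 2$ I will rewrite
$$t^{b_i}(1-t^{b_1})=\bigl(t^{b_i}-t^{b_0+b_i}\bigr)-t^{b_i}\bigl(t^{b_1}-t^{b_0}\bigr),$$
both terms of which lie in $J$ (the second because $t^{b_i}\in R$ and $t^{b_1}-t^{b_0}$ is a generator of $J$). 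Multiplying by $c_i$ and summing yields $t^{b_0}(1-t^{b_1})\in J$. Then
$$t^{b_1}(1-t^{b_0})=(t^{b_1}-t^{b_0})+t^{b_0}(1-t^{b_1})\in J,$$
and finally using $1-t^{b_1}=(1-t^{b_0})-(t^{b_1}-t^{b_0})$ one gets
$$t^{b_1}(1-t^{b_1})=t^{b_1}(1-t^{b_0})-t^{b_1}(t^{b_1}-t^{b_0})\in J.$$
This exhausts all generators, proving $\overline{I}\subseteq J$, hence equality. The bound $\mu_R(\overline{I})\le n=\mu_R(\fka)$ is then immediate from the presentation of $J$.

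The only subtle step is the reduction chain showing that $t^{b_1}(1-t^{b_0})$ and the $t^{b_i}(1-t^{b_1})$ are in $J$; the key input that makes it work is precisely the choice $b_0=b$ with $t^b\in\fkb$, which is why the preceding discussion had to verify that such a $b$ coprime to $b_1$ exists (using that $R/\fkb$ is Artinian, so $t^mS\subseteq\fkb$ for $m\gg 0$). Everything else is formal manipulation in the ideal structure of $R$.
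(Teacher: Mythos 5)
Your proposal is correct and follows essentially the same route as the paper: both start from $\overline{I}=\fka\cdot(fS\cap R)=\fka\cdot(1-t^{b_0},1-t^{b_1})$ via Lemma \ref{lem1} and Theorem \ref{4.2}, and then eliminate the redundant half of the $2n$ product generators by elementary identities, with the membership $t^{b_0}=t^{b}\in\fkb$ providing the key reduction. The algebraic identities you use differ only cosmetically from those in the paper's proof.
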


\begin{proof} Let $a = b_1$. By Lemma \ref{lem1} and Theorem \ref{4.2}, $\overline{I}= \fka{\cdot}(fS \cap R)= \fka{\cdot}(1-t^a, 1-t^b)$, while by Lemma \ref{lem1} $t^a-t^b \in \overline{I}$, because $t^a-t^b \in t^qS \cap R=\fka$ and $t^a - t^b \in fS\cap R$. Therefore, since $\fka = (t^{b_0}, t^{b_1},\ldots, t^{b_n})$, we get $$\overline{I} = (t^{b_i} - t^{a+b_i}\mid 0 \le i \le n)+(t^{b_i}- t^{b + b_i}\mid 0 \le i \le n).$$ Therefore, $\overline{I} = (t^{a} - t^{b})+(t^{b_i}- t^{b + b_i}\mid 0 \le i \le n)$, since
$$
t^{b_i}-t^{a+b_i}=(-t^{b_i})(t^a - t^b)+(t^{b_i} - t^{b+b_i})
$$
for each $0 \le i \le n$. We set $J = (t^{a} - t^{b})+(t^{b_i}- t^{b + b_i}\mid 2 \le i \le n)$. Then, since $t^b \in \fkb$, we have $b = b_i + h$ for some $2 \le i \le n$ and $h \in H$, so that $t^b- t^{b+b_0} =t^h(t^{b_i} - t^{b+b_i}) \in J$. On the other hand, because $$
t^{b_1} - t^{b+b_1} = (t^a -t^b) + (t^ {b_0} - t^{a+b_0}) \ \ \text{and}\ \ t^{b_0} - t^{a+b_0}=(-t^{b_0})(t^a - t^b) + (t^{b}- t^{b+b_0}),
$$
we get $t^{b_1}+ t^{b+b_1} \in J$. Thus $\overline{I} = J$, as claimed.
\end{proof}

Let us note one example.

\begin{ex}
Let $H=\left<4,11,13\right>$ and $R= k[H]$. Let $q =12$ and $f = 1-t$. Let $I$ be an ideal of $R$ such that $IS =( t^{12}f)S$. Hence, $\overline{I} = (t^{12}f)S \cap R$. We have $\fka = t^{12}S \cap R= (t^{12}, t^{13},t^{15}, t^{22})$. Set $\fkb=(t^{13},t^{15}, t^{22})$, and take $a = 12$, $b = 13$. Then, by Theorem \ref{4.7} $\overline{I}=(t^{12}-t^{13}, t^{13}-t^{26}, t^{15} - t^{28}, t^{22} - t^{35})$. We actually have that $\overline{I} = (t^{12}, t^{13}+t^{14}, t^{15}, t^{22})$. 
\end{ex}

\end{document}